\documentclass[12pt,letterpaper]{amsart}
\usepackage{ amsmath,amsthm, amsbsy,amsfonts,amssymb, txfonts}
\setcounter{page}{1}
\usepackage{stmaryrd,mathrsfs,graphicx, amscd, tikz-cd, tikz, cancel}
\usepackage{tikz}
\usetikzlibrary{matrix,arrows,decorations.pathmorphing}

\usepackage[active]{srcltx}
\allowdisplaybreaks
\numberwithin{equation}{section}

\usepackage[
	hypertexnames=false,
	hyperindex,
	pagebackref,
	pdftex,
	breaklinks=true,
	bookmarks=false,
	colorlinks,
	linkcolor=blue,
	citecolor=red,
	urlcolor=red,
]{hyperref}
\usepackage{hyperref}

\def\BB{{\mathbb B}}

\def\CC{{\mathbb C}}

\def\FF{{\mathbb F}} 
 
\def\HH{{\mathbb H}}

\def\PP{{\mathbb P}}
\def\QQ{{\mathbb Q}} 
\def\RR{{\mathbb R}}

\def\ZZ{{\mathbb Z}}

\def\ssm{\smallsetminus}

\def\mmu{\boldsymbol{\mu}}

\def\G{\Gamma}
\def\g{\gamma}

\def\ram{{\rm rm}}

\def\sg{{\rm sg}}

\def\st{{\rm st}}

\def\bs{\backslash}
\newcommand{\eps}{\varepsilon}

\def\Dcal{{\mathcal D}}
\def\Ecal{{\mathcal E}} 
 
\def\Hcal{{\mathcal H}}

\def\Mcal{{\mathcal M}}

\def\Scal{{\mathfrak S}}

\def\hyp{{\mathcal H}}

\newcommand\aut{\operatorname{Aut}}

\newcommand\Diff{\operatorname{Diff}}

\newcommand\Epi{\operatorname{Epi}}

\newcommand\Hom{\operatorname{Hom}}

\newcommand\Mod{\operatorname{Mod}}

\newcommand\PSL{\operatorname{PSL}}
\newcommand\PGL{\operatorname{PGL}}

\newcommand\PU{\operatorname{PU}}
\newcommand\Sp{\operatorname{Sp}}


\newtheorem{theorem}{Theorem}[section]
\newtheorem{lemma}[theorem]{Lemma}
\newtheorem{proposition}[theorem]{Proposition}
\newtheorem{corollary}[theorem]{Corollary}

\numberwithin{definition}{section}

\theoremstyle{remark}

\newtheorem{remark}[theorem]{Remark}


\title[Moduli space of genus 4 curves]{A ball quotient parametrizing trigonal genus  4 curves}
\author{Eduard Looijenga}
\thanks{The author was supported by the Jump Trading Mathlab Research Fund.}
\address{University of Chicago (USA) and Universiteit Utrecht (Nederland)}


\begin{document}
\begin{abstract}
We consider the moduli space of genus 4 curves endowed with a $g^1_3$ (which maps with degree 2 onto the  moduli space of genus 4 curves). We prove that it defines a degree  $\frac{1}{2}(3^{10}-1)$ cover of the 9-dimensional Deligne-Mostow  ball quotient such that the natural divisors that live on that moduli space become totally geodesic (their normalizations are  8-dimensional ball quotients).

This isomorphism differs from the one considered by S.\ Kond\=o and its construction is perhaps more elementary, as it does not involve K3 surfaces and their Torelli theorem: the Deligne-Mostow ball quotient parametrizes  certain cyclic covers of degree 6 of a projective line and we show how a level structure on such a cover produces a degree 3 cover of that line with  the same discriminant, yielding a genus 4 curve endowed with a $g^1_3$.
\end{abstract}

\maketitle
\section*{Introduction}
Shigeyuki  Kond\=o constructed in \cite{kondo} an isomorphism from a Zariski open subset of the moduli space of 
nonsingular complex  genus four curves $\Mcal_4$ onto a Zariski open subset of a ball quotient, where the latter covers the  
9-dimensional ball quotient that appears in the Deligne-Mostow list. He also noted that this cannot be  extended to the full 
$\Mcal_4$, but that this is possible if we  blow up the hyperelliptic locus $\Hcal_4$ (which is of codimension 2 in $\Mcal_4$). The exceptional divisor then maps to a hyperball quotient, a property that also holds for the generic point of the thetanull locus. His  approach is based on the fact that the generic point of $\Mcal_4$ is represented by a curve $X$ of bidegree $(3,3)$ on $\PP^1\times \PP^1$. The $\mmu_3$-cover of $\PP^1\times \PP^1$ that totally ramifies along $X$ is a K3  surface with $\mmu_3$-action  and the Torelli theorem for such surfaces implies that its $\mmu_3$-Hodge structure  is a complete invariant of that $\mmu_3$-surface and hence also of $X$ (as the fixed point set of the $\mmu_3$-action). Such $\mmu_3$-Hodge structures are parametrized by a ball quotient of the above type.

The goal of this note is twofold. First to observe that there is a  modular  interpretation 
of the  hyperelliptic blowup  as the moduli space of  pairs  consisting of a nonsingular complex  
genus four curve  and a  $g^1_3$ on that curve, where we note that a generic genus 4 curve has \emph{two} $g^1_3$'s and that this moduli space comes with an involution that exchanges them.

  And  second, to give this moduli  space a complex hyperbolic structure  without resorting to 
K3 surfaces and their Torelli theorem. For this, we start from the fact that the 
the generic point of the  Deligne-Mostow ball quotient parametrizes $\mmu_6$-covers 
$C\to \PP^1$ totally ramified over a 12 element subset $D\subset \PP^1$ and nowhere else. 
We then show how a certain level structure on $C$ gives rise to 
a genus $4$ curve $X$ lying with degree 3 over $\PP^1$ with discriminant $D$ (so  $X$ comes with a $g^1_3$) and a correspondence between $C$ and $X$. 

Our approach is in the same spirit as our paper with Heckman \cite{HL} on the moduli space of rational elliptic surfaces (which here appears  in the guise of the thetanull locus).  
Most of that paper  was concerned with the construction and comparison of certain 
compactifications  of that moduli space  (which accounts for its length) and it is likely that a 
similar program could be pursued here, where,  of course, a compactification 
which parametrizes curves of arithmetic genus 4 endowed with a $g^1_3$ would be best.

Let us finally point out that our period map is quite different from the one introduced  by Kond\=o, as the two discriminants that we associate  to the two $g^1_3$'s on a generic genus $4$ 
curve will not lie in the same $\PGL_2(\CC)$-orbit and hence have different image in the  Deligne-Mostow ball quotient. Presumably the two period maps do not even coincide on the generic point of the thetanull  locus, where the genus 4 curve has only one $g^1_3$.

\section{The nine-dimensional Deligne-Mostow ball quotient}\label{sect:DMball} 
This section is mostly a review of known results. Its main purpose is to recall them in a way that suits our purpose and to establish notation.

\subsection*{$\mmu_6$-covers of $\PP^1$}
Let $C$ be a nonsingular complex-projective curve endowed with an $\mmu_6$-action that has 12 fixed points, is free 
elsewhere and is such that its orbit space $P$ is a copy of $\PP^1$. The  discriminant of $C\to P$ is then of the form $5D_C$, 
with $D_C$ a $12$-element subset of $P$, considered as a reduced divisor and Riemann-Hurwitz shows that $C$ will have genus $25$. 
Note that the pair $(P,D_C)$ determines $C$ uniquely, the isomorphism  being unique up to a covering transformation 
(in $\mmu_6$). We recall how this gives rise to a point in a $9$-dimensional ball quotient, which happens to be the one of highest dimension  that  appears in the Deligne-Mostow list. 
 
Since we find it convenient to make a distinction between what depends on the complex 
 structure and what only depends on the underlying topology, we also fix  a closed oriented 
 surface $\Sigma$ for  of genus 25 endowed with a $\mmu_6$-action of the type above, so  with $12$ fixed points and acting freely elsewhere.  We denote by $\pi:\Sigma\to S$ for the formation of its $\mmu_6$-orbit  space (a smooth $2$-sphere), and by $D\subset S$ its set of critical values. Then any 
 complex structure on $S$ (which will make it a copy of the Riemann sphere) determines a unique one on $\Sigma$ for which the $\mmu_6$-action and $\pi$ are holomorphic, yielding a $\mmu_6$-curve as above with discriminant $5D$. We also note that a diffeomorphism of $S$ which preserves $D$ lifts to $S$ and does so almost uniquely: two such lifts will lie in the same $\mmu_6$-orbit.

\subsection*{Hodge structure of a $\mmu_6$-cover  of $\PP^1$}
We first note that the standard embedding $\chi: \mmu_6\subset \CC$ and its complex conjugate are the only two primitive characters of $\mmu_6$. The group $\mmu_6$ acts on  $H^1(\Sigma; \CC)$ and decomposes the latter into its character spaces: $H^1(\Sigma; \CC)=\oplus_{i\in \ZZ/6} H^1(\Sigma; \CC)_{\chi^i}$.  The intersection pairing gives rise to a nondegenerate Hermitian form on $H^1(\Sigma; \CC)$ defined by 
$(\alpha, \beta)\mapsto \sqrt{-1}\int_\Sigma \alpha\wedge\overline\beta$.  
It has signature $(25,25)$ and for this form the above decomposition into character spaces is perpendicular. In the presence of a complex structure it is also compatible with the Hodge structure.
For example, the decomposition  
\begin{equation}\label{Hdec}
H^1(C; \CC)_{\chi}=H^1(C, \Omega_C)_{\chi}\oplus \overline{H^1(C, \Omega_C)_{\overline\chi}}.
\end{equation}
is perpendicular with respect to this hermitian form with the first summand being positive definite of dimension one and the second  summand negative definite of dimension nine. Since the group of covering transformations $\mmu_6$ acts trivially on $\PP (H^1(\Sigma; \CC)_\chi)$ and so this projective space  only depends on the pair $(S,D)$; we therefore denote  it by $\PP(S,D)$.
Its group $\PU (H^1(\Sigma; \CC)_\chi)$ of projective unitary transformations, which we denote for a similar reason by $\PU(S,D)$, is isomorphic to $\PU (1,9)$. The positive definite complex lines in $H^1(\Sigma; \CC)_\chi$ make up an open complex ball $\BB(S,D)\subset \PP(S,D)$ on  which $\PU (S,D)$ acts transitively with compact stabilizers; indeed, this ball  is the symmetric domain associated to $\PU (S,D)$. If there is no risk of confusion, we simply write $\BB$ for $\BB(S,D)$.

As we observed above, a complex structure on $S$ determines one on $\Sigma$, turning it into a compact Riemann surface $C$ with $\mmu_6$-action. The Hodge decomposition (\ref{Hdec}) then defines an element, the \emph{Hodge point}, of $\BB$. 
The main result of Deligne and Mostow \cite{DM}, \cite{mostow} implies that this  Hodge point is a complete invariant of the relative isotopy class of this complex structure.  

\subsection*{Action of a mapping class group}
To make this last  assertion precise, let $\Diff^+(S,D)$ denote the group of orientation preserving diffeomorphisms of $S$ which preserve $D$.  Since  this group $\Diff^+(S,D)$ acts on $\Sigma$ up to covering transformations, it gives rise to a well-defined action on $\PP(S,D)$ via  $\PU (S,D)$ (and so
preserves $\BB$). This action factors through its connected component group that we shall denote $\Mod(S,D)$ and which we readily recognize as the spherical braid group on 12 strands. That group has as a distinguished conjugacy class the set of  simple braids.  We recall that a \emph{simple braid} is given by  an (unoriented) arc $\delta$ in 
$S$ which connects two points of $D$ and has no points of $D$ in its interior: the associated simple braid 
$T_\delta\in \Mod(S,D)$ has its support in a regular neighborhood of $\delta$ and lets $\delta$ make a half-turn in a counterclockwise direction. 
These simple braids generate $\Mod(S,D)$. In fact, if we enumerate the points of $D$ by $\{p_i\}_{i\in\ZZ/12}$ and let $\delta_i$ connect $p_i$ with $p_{i+1}$ in such a way as that their juxtaposition produces an embedded circle in $S$, then
$T_{\delta_1},\dots, T_{\delta_{10}}$ already generate $\Mod(S,D)$.
It follows from the work of Deligne-Mostow that the image of $\Mod(S,D)$  in $\PU (S,D)$ is an arithmetic group, which therefore acts properly discretely on $\BB$ with finite covolume. 
We denote this image group by  $\G\subset \PU (S,D)$. 

\subsection*{A lattice over the Eisenstein ring}
In order to make the action of a simple braid on $\BB$ somewhat more explicit, we make some general observations first. Write $\tau$ for the primitive $6$th root of unity with positive imaginary part, $e^{\pi\sqrt{-1}/3}$,  and regard it as a generator of $\mmu_6$. As a complex number it satisfies  $\tau^2=\tau-1$, but this identity is of course not valid in the group ring  $\ZZ[\mmu_6]$. Given a $\ZZ[\mmu_6]$-module $V$, let us denote by 
$V^\circ$ resp.\ $V_\circ$ the maximal submodule resp.\ quotient module on which $\tau$ satisfies the identity $\tau^2=\tau-1$.
For example, $V_\circ$ is the quotient of $V$ by the subgroup of $v\in V$ with nontrivial 
$\mmu_6$-stabilizer. Then  $V^\circ$ and $V_\circ$ become modules over the ring of \emph{Eisenstein integers} $\Ecal:=\ZZ+\ZZ\tau \subset \CC$, the ring of integers the cyclotomic field $\QQ(\sqrt[3]{1})$. Note that $\RR\otimes V^\circ\to\RR\otimes V_\circ$ is an isomorphism and can be identified with the summand of  $\RR\otimes V$ which decomposes into planes on which $\tau$ acts as a rotation over $2\pi/6$.
Since $\Ecal$ is a principal ideal domain, any finitely generated torsion free $\Ecal$-module is in fact free. So if $V$ is a finitely generated free $\ZZ$-module, then both $V_\circ$ and $V^\circ$ are  finitely generated free $\Ecal$-modules. If  we are in addition given a $\mmu_6$-invariant antisymmetric pairing $a: V\times V\to \ZZ$, then this gives rise to
the $\frac{1}{2}\Ecal$-valued hermitian form on $V^\circ$ defined by 
\[
h_a(x,y):=\tfrac{1}{2}\tau a(x, y)-\tfrac{1}{2}a(x,\tau y)
\]
(the justification for the perhaps somewhat unexpected normalization factor $\tfrac{1}{2}$ will be given below). Note that $h_a(x,x)=-\tfrac{1}{2}a(x,\tau x)$ and 
$h_a(x,y)-\overline{h_a(x,y)}=\tfrac{1}{2}(\tau-\overline\tau) a(x, y)=\tfrac{1}{2}\theta a(x,y)$, where 
\[
\theta:=\tau-\overline\tau=-1+2\tau=\sqrt{-3}
\]
is the `first' purely imaginary number in $\Ecal$.
We apply this to the $\ZZ[\mmu_6]$-module $H_1(\Sigma)$.  We find that $H_1(\Sigma)^\circ$  and $H_1(\Sigma)_\circ$ are free  $\Ecal$-modules of rank 10. The intersection pairing on $H_1(\Sigma)$ is nondegenerate on $H_1(\Sigma)^\circ$
and defines via the receipe above on this lattice the $\Ecal$-valued hermitian form 
\[
h(x,y):=\tfrac{1}{2}\tau (x\cdot y)-\tfrac{1}{2}(x\cdot\tau y). 
\]
We will write $L$ for $H_1(\Sigma)^\circ$ endowed with this form.

Let us see what this induces on the preimage  $\pi^{-1}\delta$ of an arc $\delta$ as above.
We now assume $\delta$ oriented and  let $\tilde\delta$ be a lift of $\delta$ over $\pi$. Then $c_\delta:=(1-\tau^3)\tilde\delta$ is an embedded oriented circle (hence a $1$-cycle) and meets its $ \tau$-translate in 2 points, each with multiplicity $1$. In other words,
$c_\delta\cdot \tau c_\delta=2$.
Now $H_1(\pi^{-1}\delta)^\circ$ has the $\Ecal$-generator $a_\delta:=(1+\tau)(1-\tau^3)\tilde\delta=(1+\tau)c_\delta$ (which is unique up multiplication by an element of $\mmu_6$), and the above formula shows that $h(a_\delta,a_\delta)=\tfrac{1}{2}.-6=-3$. A similar computation shows that $h(a_{\delta_i},a_{\delta_{i+1}})\in\mmu_6\theta$. So $h|L\times L$ takes values in $\theta\Ecal$, 
so that  $\theta^{-1}h$ defines a skew-hermitian form 
\[
h':=\theta^{-1}h: L\times L\to \Ecal
\]
Since we have the freedom of multiplying  each $a_\delta$ with an element of $\mmu_6$, we can choose $a_{\delta_1},\dots , a_{\delta_{10}}$  in such a manner that it is a $\Ecal$-basis of $L$ with 
\[
h(a_{\delta_i}, a_{\delta_j})=
\begin{cases} 
-3 &\text{ when $j=i$,}\\
\pm \theta &\text{ when $j=i\pm 1$,}\\
0 &\text{ otherwise.}\\
\end{cases}
\]
This also shows  that $L$ is isomorphic to the lattice thus denoted in \cite{allcock}  (this explains the coefficient $\tfrac{1}{2}$ in our definition of $h$) and if we  $h$ replace by $-h$, it becomes isomorphic to the lattice denoted  $\Lambda$ in \cite{HL}.  In \cite{allcock},   $L$  is in fact constructed from the
even $\ZZ$-lattice $\mathbf{E}_8\perp\mathbf{E}_8\perp\mathbf{U}\perp\mathbf{U}$. Going in the opposite direction, this amounts to the statement  that we recover this lattice if take the underlying abelian group and the even symmetric pairing on it defined by the real part of $h$ multiplied with $-\frac{2}{3}$. 

\subsection*{Mirror hyperballs}
The simple braid $T_\delta$ acts in  $L$ as  the unitary transformation $s_\delta$  defined by   
\[
s_\delta(x)=x+\tfrac{1}{3}(1-\tau^2)h(x,a_\delta )a_\delta=  x+\tau h'(x,a_\delta)a_\delta
\]
It is what is called in \cite{allcock}  a \emph{triflection} in $\Ecal a_\delta$: it multiplies  $a_\delta$ with the third root of unity $\tau^2$ and is the identity on the orthogonal complement of $a_\delta$. It indeed preserves $L$, as  follows from the fact  that $h'$ takes its values in $\Ecal$.

Ordinary duality identifies $H^1(\Sigma)^\circ$ resp.\ $H^1(\Sigma)_\circ$ with the dual of 
$H^1(\Sigma)_\circ$ resp.\ $H^1(\Sigma)^\circ$. We use Poincar\'e  duality to identify these with 
resp.\  $L$ resp.\ $H_1(\Sigma)_\circ$ and use on these the same hermitian form as above.
 This identifies  $\PP(S,D)$ with $\PP(\CC\otimes_\Ecal L)$. In particular, $T_\delta$ acts on 
 $\BB$ as a triflection.  So its fixed point set $\BB^{T_\delta}$ is a hyperball (to which we shall 
 also refer as a \emph{mirror} in $\BB$). Since the $T_\delta$ make up a single conjugacy class in 
$\Mod(S,D)$, the group $\Mod(S,D)$ permutes the mirror hyperballs transitively.

The group of unitary transformations of  the $\Ecal$-module $L$  acts properly discretely on $\BB$ with kernel its center $\mmu_6$. The group $\G$ is a priori contained in the quotient of this group by its center (which is an arithmetic subgroup of $\PU(S,D)$), but as Allcock  has shown (\cite{allcock}, Thm.\ 5.1), is in fact equal to it. So $\G$ acts properly discontinuously on  $\BB$ and the Baily-Borel theory asserts that $\BB_\G:=\G\bs\BB$  has the structure of a quasi-projective orbifold that can be completed to a normal projective variety by adding a finite set of \emph{cusps} (in this case, just one).
The image of the Hodge point in this ball quotient $\BB_\G$ is a complete invariant of the isomorphism type of the $\mmu_6$-curve $C$, or equivalently, of the pair $(S,D)$ with the given complex structure.

The $\Mod(S,D)$-centralizer of  $T_\delta$ acts on the mirror $\BB^{T_\delta}$ with image an arithmetic subgroup and 
the resulting ball quotient  of $\BB^{T_\delta}$ appears in the Deligne-Mostow list as the one defined by the sequence $(\frac{2}{12},\frac{1}{12}, \frac{1}{12},\dots ,\frac{1}{12})$. 

The group $\G$ is as a quotient of  $\Mod(S, D)$ obtained by imposing the relation $T_\delta^3=1$. The 
mirrors  are locally finite on $\BB$ and since they are transitively permuted by $\G$, they define 
an irreducible totally geodesic hypersurface $\Dcal$ in $\BB_\G$ that we shall call  the \emph{confluence locus}. 
We write $\BB^\circ$ for the complement of the union of the mirrors (an open subset of $\BB$). 
So this is the preimage of  $\BB_\G^\circ\ssm \Dcal$. 

\subsection*{The Deligne-Mostow  isomorphism}
Let $\Mcal(12)$ denote the $\PGL(2, \CC)$-orbit space of the configuration space of $12$-element subsets of $\PP^1$ (this is also equal to the $\Scal_{12}$-orbit space of $\Mcal_{0,12}$). The above construction defines a homomorphic map 
\[
\Mcal(12)\to \BB_\G.
\]
 If we think of a $12$-element subset of $\PP^1$ as a reduced degree 
$12$-divisor, then the above map extends to  the locus $\Mcal(12)^\st$ of Hilbert-Mumford stable positive degree $12$-divisors, 
that is, divisors for which the multiplicities are at most $5$. The Deligne-Mostow theory asserts that this  extension is in fact an isomorphism 
\[
\Mcal(12)^\st\xrightarrow{\cong}  \BB_\G
\]
that takes $\Mcal(12)$ onto  $\BB_\G^\circ$. It even extends to an isomorphism of their natural compactifications: the GIT compactification on the left and the Baily-Borel compactification on the right. Both are one-point compactifications, with the added point on the left being represented by a $6$-divisible divisor on $\PP^1$ whose support consists of two points.

\section{Trigonal structures on a genus four curve}
\subsection*{$\mathbf{g^1_3}$'s on a genus 4 curve}
Let $X$ be a smooth connected complex-projective  curve of genus $4$ and let $P$ be a pencil of degree $3$ on $X$ (in other words, a $g^1_3$).
If $P$ has no base point, then the pencil  defines a morphism $X\to P$ of degree $3$ and  by Riemann-Hurwitz, the discriminant divisor $D(X, P)$ of this morphism is of degree $12$. Note that its multiplicities will be $\le 2$.
In case $P$ has a base point, then the residual base point free pencil $P'$ will have degree $2$ and this makes $X$ hyperelliptic with 
$P'$ being the hyperelliptic pencil. We will see  that it is then natural to define the discriminant divisor of $P$ to be 
the degree $12$-divisor $D(X, P)$  that is (via the identification of $P$ with $P'$)  the sum of the discriminant divisor of $X\to P'$ (the image of the  Weierstra\ss\ points, so of degree 10) plus twice the image in $P'$  of the fixed point $P$. So its multiplicities are then $\le 3$.

Recall that if $X$ is nonhyperelliptic, then a canonical image of $X$ in $\PP^3$ is the transversal intersection of a cubic surface with a quadric $Q$ surface, the  latter being unique and either a smooth (so isomorphic to $\PP^1\times\PP^1$) or a quadric cone. In the first case, the projections on the two factors give us two pencils of degree $3$. In the  second case, projection away from the vertex of the cone also yields a  pencil of degree $3$; the line bundle is question then an effective  (even) theta characteristic. It is well-known that this exhausts all the $g^1_3$'s in genus 4. For future reference we state this as a lemma (and only sketch the proof).

\begin{lemma}\label{lemma:pencils}
Let $X$ be a smooth connected complex-projective  curve of genus $4$. If $X$ is nonhyperelliptic, 
then any $g^1_3$ on $X$ is as above: it comes with a residual $g^1_3$ (in the sense that its members supplement the members of the given $g^1_3$ to a canonical divisor) and there are no others (so the two being equal 
if and only if the $g^1_3$ is a theta characteristic). If $X$ is  hyperelliptic, then any $g^1_3$ on $X$ is the sum of an $x\in X$ and its hyperelliptic $g^1_2$; in that case the pencil is a theta characteristic if and only if $x$ is a Weierstra\ss\ point.
\end{lemma}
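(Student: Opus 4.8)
The plan is to handle both cases with the same geometric device: present a $g^1_3$ as a pencil of trisecant lines of the canonical (or, in the hyperelliptic case, rational) model of $X$, and then locate those trisecants on the quadric containing the canonical image.

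First I would record the Riemann--Roch bookkeeping that is common to both cases. If $L$ is a line bundle of degree $3$ with $h^0(L)=2$ (i.e.\ a $g^1_3$), then since $\deg K_X=6$ and $\chi(L)=3-4+1=0$, Riemann--Roch gives $h^0(K_X-L)=h^0(L)=2$ and $\deg(K_X-L)=3$. Hence $K_X-L$ is again a $g^1_3$, the \emph{residual} one, and for $D\in|L|$, $D'\in|K_X-L|$ one has $D+D'\in|K_X|$; the two pencils coincide exactly when $2L\cong K_X$, that is, when $L$ is a theta characteristic. This already accounts for the ``comes with a residual $g^1_3$'' and ``equal iff theta characteristic'' clauses in both the hyperelliptic and the nonhyperelliptic case.

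In the nonhyperelliptic case the canonical map embeds $X$ in $\PP^3$ as a sextic lying on a unique quadric $Q$ (a one-line count: the restriction $H^0(\PP^3,\Ocal(2))\to H^0(X,2K_X)$ has kernel of dimension $10-9=1$), and $X$ is the complete intersection of $Q$ with a cubic. I would first note that every $g^1_3$ is base-point free, since removing a base point would produce a $g^1_2$ and make $X$ hyperelliptic. Then geometric Riemann--Roch applies: for $D\in|L|$ of degree $3$ with $\dim|D|=1$ the linear span of $D$ has dimension $3-1-1=1$, so the three points of $D$ are colinear and lie on a trisecant line $\ell$. As $\ell$ meets $X\subset Q$ in three points it meets $Q$ in at least three points, and therefore $\ell\subset Q$; hence $\ell$ belongs to a ruling of $Q$. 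Consequently $L$ is cut out by a ruling of $Q$ and there are no other $g^1_3$'s. If $Q$ is smooth the two rulings give the two distinct pencils $L$ and $K_X-L$ (their sum being $\Ocal_Q(1,1)|_X=K_X$); if $Q$ is a cone there is only one ruling, so the residual pencil must again be a ruling pencil, forcing $K_X-L\cong L$, i.e.\ $L$ is the effective even theta characteristic cut out by projection from the vertex. In the hyperelliptic case one has $K_X\cong 3H$ with $H$ the hyperelliptic $g^1_2$, and the canonical image is the twisted cubic $R\subset\PP^3$ covered $2:1$ by $X$. Given a $g^1_3$ $L$, geometric Riemann--Roch again makes each $D\in|L|$ colinear; but a line meets $R$ in at most two points, so two of the three points of $D$ have the same image on $R$, hence form a hyperelliptic conjugate pair and constitute a member of $|H|$. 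Writing $D=(\text{such a pair})+p$ shows $L-H$ is effective of degree $1$, whence $L\cong H+x$ for a unique $x\in X$, the pencil having base point $x$ with residual $|H|$. Finally $L\cong H+x$ is a theta characteristic iff $2H+2x\cong 3H$, i.e.\ iff $2x\cong H$, which holds precisely when $x$ is a Weierstra\ss\ point.

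The routine bookkeeping (Riemann--Roch and the quadric count) is standard; the step carrying the real content is the trisecant argument, and within it the cone case. There one must check carefully that the single ruling of the quadric cone accounts for \emph{every} $g^1_3$ and that self-residuality $K_X-L\cong L$ is genuinely forced, so that such a curve carries a unique $g^1_3$, necessarily a theta characteristic. The parallel subtlety in the hyperelliptic case is the geometric input that the canonical model is a twisted cubic with no honest trisecants; granting that, colinearity forces a conjugate pair and the description $L\cong H+x$ drops out.
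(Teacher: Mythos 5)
Your proposal is correct and takes essentially the same route as the paper's proof: Riemann--Roch produces the residual pencil (with completeness of the $g^1_3$, which the paper gets from Clifford's theorem, built into your definition $h^0(L)=2$), and geometric Riemann--Roch realizes the members of a $g^1_3$ as trisecant lines, located on the unique quadric through the canonical model. You simply carry out in full the case analysis (two rulings of a smooth quadric, the self-residual pencil on the cone, and the no-trisecant argument for the twisted cubic in the hyperelliptic case) that the paper explicitly leaves to the reader.
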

\begin{proof}
This is well-known, as it is in a sense a geometric formulation of Clifford's theorem in genus $4$. The proof runs as follows. If $P$ is a $g^1_3$ on $X$, then Clifford's theorem implies that $P$ is a complete linear system.
By Riemann-Roch we have a supplementary $g^1_3$  on $X$, denoted $P'$,  whose members supplement the 
members of $P$ to a canonical divisor. So each member of $P$ resp.\ $P'$ defines a line in 
$\check\PP(H^0(X, \Omega_X))\cong \PP^3$ which meets the canonical image of $X$ in at most three points and 
$P'$ resp.\ $P$ is realized by the pencil of planes through that line. We have $P=P'$ if and only if $P$ is a 
theta characteristic. The rest of the argument is left to the reader.
\end{proof}

\subsection*{A moduli stack  of genus 4 curves with a $\mathbf{g^1_3}$}
We interpret this in terms of moduli spaces. Pairs $(X, P)$ as above define a Deligne-Mumford stack that 
we shall denote by $\Mcal_4(g^1_3)$. It comes with an involution which assigns to $(X,P)$ the residual pair $(X,P')$ that is characterized by the property that $P+P'$ lies in the canonical system.
The forgetful morphism $\Mcal_4(g^1_3)\to \Mcal_4$ evidently  factors through the orbit space of this involution. 
Recall that the moduli space $\Mcal_4$ contains  the thetanull locus $\Mcal_4^0$ (the locus for which the curve admits an even effective 
theta characteristic) as a substack of codimension one and that this  substack $\Mcal_4^0$ in turn contains the hyperelliptic locus $\Hcal_4$  as a substack of codimension one. 

Consider the  Stein factorization 
\[
\Mcal_4(g^1_3)\to\hat\Mcal_4\to \Mcal_4
\]
It follows from Lemma \ref{lemma:pencils} that $\hat\Mcal_4\to \Mcal_4$ is finite of degree two and ramified along $\Mcal_4^0$, with the covering transformation being induced by the residual involution. It also shows that if $\hat\Hcal_4$ stands for the (isomorphic) preimage of  $\Hcal_4$ in $\hat\Mcal_4$, then $\Mcal_4(g^1_3)\to\hat\Mcal_4$ is an isomorphism over $\hat\Mcal_4\ssm \hat\Hcal_4$ and is over 
$\hat\Hcal_4\cong\Hcal_4$ the universal hyperelliptic curve $\Hcal_4(g^1_3)$ of genus $4$ (so that will be a divisor). The strict transform of  $\Mcal_4^0$ in $\Mcal_4(g^1_3)$ is another  divisor $\Mcal_4(g^1_3)^0$ on $\Mcal_4(g^1_3)$ which meets 
$\Hcal_4(g^1_3)$ in  the  locus  parametrizing the Weierstrass points  (this locus is geometrically connected, since the monodromy is transitive on the Weierstrass points).

Our goal is to realize  $\Mcal_4(g^1_3)$ as an open subset of a  quotient of $\BB$ relative some  finite index subgroup of $\G$. We determine this subgroup in two ways: first in a purely topological manner and subsequently more in the spirit of algebraic geometry as a level structure on the $\mmu_6$-curve $\Sigma$.

\subsection*{The approach via a classification of coverings} This begins with  addressing  the question of how many topological types of smooth connected degree three coverings of the $2$-sphere $S$ exist that have the $12$-element subset $D\subset S$ as reduced discriminant. Such a covering is given by its restriction to $S\ssm D$ (which is then unramified).  So if we choose a base point $o\in S\ssm D$, then it  can be given by a group homomorphism $\rho: \pi_1(S\ssm D, o)\to \Scal_3$ which assigns to a simple loop around a point of $D$ a transposition. The image of $\rho$ must be a transitive subgroup of $\Scal_3$ and hence must be all of $\Scal_3$. Two such epimorphisms define isomorphic coverings if and only if they differ by an inner 
automorphism of $\Scal_3$. So if we let  $\Epi'(\pi_1(S\ssm D, o),\Scal_3)$ stand for the set of group of surjective homomorphisms $\pi_1(S\ssm D, o)\to \Scal_3$ that take simple loops to transpositions, then the  set of topological types is naturally identified with
\[
R(S,D):= \Scal_3\bs \Epi'(\pi_1(S\ssm D, o),\Scal_3).
\]
We determine its number of elements. We choose smooth arcs $\{\g_i\}_{i\in\ZZ/12}$ from $o$ to the distinct points of $D$ that only meet at $o$ and depart from there along  rays  in $T_oS$ in a counterclock wise order. This means that the associated simple loop associated to $\g_i$  defines a $c_i\in\pi_1(S\ssm D, o)$ so that $c_{11}\cdots c_0=1$ and  $c_1, \dots , c_{11}$ are free generators. Then a $\rho$ as above is given by its values on $c_1, \dots , c_{11}$.
Note that $\rho(c_1), \dots , \rho(c_{11})$  is a sequence in the 3-element set of transpositions $\{(12), (23), (31)\}$ of $\Scal_3$ 
that cannot be nonconstant, but can otherwise be arbitrary. So there are $3^{11}-3$ such $\rho$. The action of $\Scal_3$  on this set by conjugation is free and so  $\#R(S,D)=(3^{11}-3)/6=(3^{10}-1)/2=3^9+3^8+\cdots +1$. This number 
is also equal to $\#\PP^9(\FF_3)$. We shall see that this is not a  coincidence.

Evidently the group of automorphisms of  $\pi_1(S\ssm D, o)$ which preserves the simple loops 
acts on $R(S,D)$. Since we have divided out by   $\Scal_3$-conjugation, the inner automorphisms 
act trivially, so that we have an outer action of  $\Mod(S,D)$  on $\pi_1(S\ssm D, o)$. 
This gives us  a genuine action of  $\Mod(S,D)$ on $R(S,D)$. 

\begin{lemma}\label{lemma:transitive}
The action of $\Mod(S,D)$ on $R(S,D)$ is  through $\G$ and is transitive. 
\end{lemma}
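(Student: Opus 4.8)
The plan is to prove the two assertions separately: first that the $\Mod(S,D)$-action on $R(S,D)$ factors through $\G$, and then that the resulting $\G$-action is transitive. I expect the factorization to be the easier half and transitivity to be where the real work lies.

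For the factorization, recall that $\G$ is obtained from $\Mod(S,D)$ by imposing the relations $T_\delta^3=1$ on the simple braids. So it suffices to check that each simple braid $T_\delta$ acts on $R(S,D)$ with order dividing $3$. I would compute this directly on the level of the homomorphisms $\rho\colon\pi_1(S\ssm D,o)\to\Scal_3$. A simple braid $T_\delta$ associated to an arc $\delta$ joining two points of $D$ acts on $\pi_1(S\ssm D,o)$ by the usual half-twist formula, conjugating one of the two simple loops $c,c'$ encircling the endpoints of $\delta$ by the other (and fixing the rest up to the standard braid relation). Writing $\rho(c)$ and $\rho(c')$ as transpositions in $\{(12),(23),(31)\}$, one sees that $T_\delta$ permutes the ordered pair $(\rho(c),\rho(c'))$ by the action $(s,t)\mapsto (sts, s)$ or its inverse. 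Since all entries are transpositions, a short check shows this has order $3$ on pairs of \emph{distinct} transpositions (the only case that occurs, as $\rho$ is nonconstant on any pair of adjacent loops for a surjective $\rho$) — so $T_\delta^3$ acts trivially on $R(S,D)$, giving the desired factorization through $\G$.

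For transitivity, the cleanest route is a counting argument combined with a homogeneity input, rather than an explicit orbit chase. By Lemma \ref{lemma:pencils} the set $R(S,D)$ has exactly $(3^{10}-1)/2=\#\PP^9(\FF_3)$ elements. I would exploit the identification, suggested by this numerology and by the $\Ecal$-lattice $L$ of rank $10$, between $R(S,D)$ and the set of points of $\PP^9(\FF_3)=\PP(L/\theta L)$, where $\theta=\sqrt{-3}$ and $L/\theta L$ is the $10$-dimensional $\FF_3$-vector space attached to the reduction of the Eisenstein lattice. The action of $\G$ on $L$ reduces mod $\theta$ to a linear action on $L/\theta L$, and the triflections $s_\delta$ reduce to transvections on this $\FF_3$-space. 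The matching is set up so that the combinatorial $\Scal_3$-coloring data of $\rho$ corresponds to a nonzero vector (up to scalar) in $L/\theta L$: concretely, the $\Ecal$-generators $a_{\delta_i}$ reduce to a basis, and the values $\rho(c_i)$ record the coordinates of the covering's monodromy in $L/\theta L$.

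The main obstacle, and the step I would spend the most care on, is making this dictionary between coverings and $\FF_3$-points precise and $\Mod(S,D)$-equivariant — i.e.\ checking that the topologically defined action on $R(S,D)$ really does correspond to the linear action of $\G$ on $\PP(L/\theta L)$ under the bijection. Once equivariance is established, transitivity follows because the image of $\G$ in $\PGL(L/\theta L)=\PGL_{10}(\FF_3)$ contains enough transvections (the reductions of the triflections $s_{\delta_i}$, which are transitively permuted by $\G$) to generate a group acting transitively on $\PP^9(\FF_3)$; indeed the group generated by the mod-$\theta$ transvections is all of $\SL_{10}(\FF_3)$ up to scalars, which is certainly transitive on lines. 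Alternatively, if setting up the full dictionary proves delicate, I would instead prove transitivity directly by showing that any two colorings $\rho,\rho'$ can be connected by a sequence of simple-braid moves: using the free generators $c_1,\dots,c_{11}$ and the relation $c_{11}\cdots c_1=1$, one checks that the braids $T_{\delta_i}$ act on the coloring sequence by local moves that suffice to transform any admissible (nonconstant-on-adjacent-pairs) sequence into any other, which is a finite combinatorial verification on triples of transpositions.
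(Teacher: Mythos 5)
Your first half (factorization through $\G$) is essentially the paper's own argument and is correct: basing at an interior point of $\delta$, the braid $T_\delta$ acts on the ordered pair of transpositions attached to the endpoints of $\delta$ by $(s,t)\mapsto (sts,s)$, which is trivial when $s=t$ and of order $3$ when $s\neq t$, so $T_\delta^3$ acts trivially in either case and the action descends to $\G$. Your parenthetical claim that the two transpositions are always distinct is false --- surjectivity of $\rho$ only forces the whole sequence to be nonconstant, not each adjacent pair --- but this is harmless, since the equal case is the trivial one. (Also a small slip: the count $\#R(S,D)=(3^{10}-1)/2$ comes from the discussion preceding the lemma, not from Lemma~\ref{lemma:pencils}.)

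The transitivity half has a genuine gap, and within the logical structure of this paper your main route is circular. The identification of $R(S,D)$ with $\PP(\FF_3\otimes L)$ that you invoke is precisely Theorem~\ref{thm:main}, and the paper proves that theorem \emph{using} Lemma~\ref{lemma:transitive}: the map $[\ell]\mapsto \Sigma'_\ell$ is built from the level-structure covers, and its bijectivity is deduced from equivariance, equal cardinalities, and transitivity of $\Mod(S,D)$ on $R(S,D)$. You yourself flag the construction of an equivariant dictionary as ``the main obstacle,'' but you never carry it out; the phrase ``the values $\rho(c_i)$ record the coordinates of the covering's monodromy in $L/\theta L$'' is not a construction, and without transitivity you would additionally have to prove injectivity of the dictionary by hand. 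Two further problems: symplectic transvections generate at most $\Sp_{10}(\FF_3)$, never ``$\SL_{10}(\FF_3)$ up to scalars'' (they preserve the symplectic form $h'_{\FF_3}$); and that they generate all of $\Sp_{10}(\FF_3)$ is itself Lemma~\ref{lemma:mod3}, proved via A'Campo --- though for transitivity on lines $\Sp_{10}(\FF_3)$ would indeed suffice. Your fallback sketch points in the direction the paper actually takes, but it is a real theorem (essentially Clebsch-type transitivity of the Hurwitz action on simple $\Scal_3$-coverings), not ``a finite combinatorial verification on triples'': the moves act on $11$-tuples of transpositions subject to the product-one relation on the sphere. The paper's proof constructs, for a given covering, a standard generating set in normal form $\rho(c_i)=(12)$ for $i$ even, $(23)$ for $i$ odd, by an induction whose delicate point is keeping the preimage of the complementary disk connected at each step (possible because at least three simple ramification points remain), closing with the consistency check $\big((23)(12)\big)^5(23)=(12)$ for $c_0$, and then appeals to transitivity of $\Mod(S,D)$ on standard generating sets. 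None of this inductive content appears in your sketch, so the heart of the transitivity claim remains unproved.
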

\begin{proof}
Recall that an (unoriented) arc $\delta$ in $S$ connecting two distinct points of $D$ whose 
relative interior does not meet any point of $D$  represents a simple braid in $\Mod(S,D)$. If we choose an interior  point of $\delta$ as our base point $o$, then this yields two simple  loops and hence two transpositions of the fiber over $o$.
Then $T_\delta$  acts trivially on $\rho$ if these two  transpositions are equal and, as a straightforward verification shows,  is of order $3$ otherwise. It follows that this action of $\Mod(S,D)$ on $R(S,D)$ factors through $\G$. 

For the transitivity property it is enough to show that for a  given degree 3 covering $\Sigma'\to S$ as above there exists a standard generating set  $(c_0, \dots , c_{11})$ of $\pi_1(S\ssm D, o)$ as above and a numbering 
of the fiber over $o$ such that $\rho_{\Sigma'}(c_i)$ equals $(12)$ for $i$ even and  $(23)$ for $i$ odd. 
Since $\Mod(S,D)$ acts transitively on the collection of standard generating sets, this will indeed suffice.

Suppose that for $i\le 10$ we have constructed arcs $\g_1, \dots , \g_{i}$ that depart at $o$  along tangent rays in a counterclockwise order, are otherwise disjoint and are such that $\rho(c_1), \dots \rho(c_i)$ are as desired, and with the property that
the preimage in $\Sigma'$ over $\Delta_i:=S\ssm \{\g_1\cup \dots \cup \g_i\}$ (a copy of an open disk)  is connected. 
This means that the monodromy over $\Delta_i$ is still the full $\Scal_3$. The reader will then 
have no trouble finding an arc $\g_{i+1}$ which departs along a ray in the sector spanned by the rays of departure of  $\g_i$ and $\g_1$ and which stays in $\{0\}\cup\Delta$ and whose associated monodromy is the prescribed transposition. 

But if $i\le 9$,  extra care is needed to  ensure that the preimage in $\Sigma'$ over 
$\Delta_{i+1}:=S\ssm \{\g_1\cup \dots \cup \g_{i+1}\}$ is still connected. That this indeed possible follows from the fact that the preimage over $\Delta_i$  is connected  and has at least three  points of simple ramification.

On the other hand, if $i=10$  we are done, for then the value of $\rho$ on  $c_0:=c_1^{-1}\cdots c_{11}^{-1}$ will be 
$\big((23)(12)\big)^5(23)=(12)$. This completes the proof.
\end{proof}

Fix a degree 3 covering $\Sigma'\to S$ as above and denote by  $r\in R(S,D)$ the associated element and denote by $\G'\subset \G$ its stabilizer. It follows from Lemma \ref{lemma:transitive} that $\G'$ is a subgroup $\G$ of index 
$(3^9-1)/2$. By construction,  the isomorphism type of a connected smooth degree $3$ covering of a 
copy of $\PP^1$ with reduced discriminant of degree $12$ determines a $\G'$-orbit in $\BB^\circ$. 
So if $\Mcal_4(g^1_3)^\circ$ stands for the  open subset of $\Mcal_4(g^1_3)$ that parametrizes pairs $(X, P)$  for which $P$ has precisely $12$ nonreduced members (in other words, for which $X$ is nonhyperelliptic and for which $X\to P$ has reduced discrimininant), then we find:

\begin{corollary}\label{cor:ballquotient1}
The open subset $\Mcal_4(g^1_3)^\circ$ is naturally isomorphic to the ball quotient $\BB_{\G'}^\circ$.
\end{corollary}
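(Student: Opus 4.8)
The plan is to exhibit both sides as the same connected covering of the Deligne--Mostow ball quotient $\BB_\G^\circ\cong\Mcal(12)$, and to identify its classifying monodromy with the action of $\Mod(S,D)$ on $R(S,D)$ from Lemma~\ref{lemma:transitive}.

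First I would write down the forgetful morphism. A point of $\Mcal_4(g^1_3)^\circ$ is a pair $(X,P)$ with $X$ nonhyperelliptic of genus $4$ and $P$ a base point free $g^1_3$ whose associated degree $3$ map $X\to P\cong\PP^1$ has reduced discriminant $D(X,P)$ of degree $12$; sending $(X,P)$ to the pair $(P, D(X,P))$ then defines a morphism
\[
f\colon \Mcal_4(g^1_3)^\circ\longrightarrow \Mcal(12).
\]
Conversely any connected smooth degree $3$ cover of a $\PP^1$ simply branched over $12$ distinct points has genus $4$ by Riemann--Hurwitz and recovers $(X,P)$ on pulling back $\Ocal_{\PP^1}(1)$, so the fibre of $f$ over the class of $(P,D)$ is precisely the set of isomorphism classes over $P$ of connected degree $3$ covers with reduced discriminant $D$. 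By the analysis preceding Lemma~\ref{lemma:transitive} this fibre is $R(S,D)$ (after dividing by $\aut(P,D)$, which is trivial generically). Since such a cover is pinned down by its conjugacy class of monodromy $\rho$, a discrete datum that is locally constant as $(P,D)$ varies, I expect $f$ to be a finite covering of degree $\#R(S,D)=(3^{10}-1)/2$.

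Next I would recognise $f$ through its monodromy. Under the Deligne--Mostow isomorphism $\Mcal(12)\cong\BB_\G^\circ$ one has $\pi_1^{\mathrm{orb}}(\Mcal(12))=\Mod(S,D)$, with a loop around the confluence locus $\Dcal$ represented by a simple braid $T_\delta$. Transporting a degree $3$ cover around such a loop braids the twelve branch points and hence acts on its topological type $\rho$ exactly by the braid action; so the monodromy of $f$ on the fibre $R(S,D)$ is the very $\Mod(S,D)$-action of Lemma~\ref{lemma:transitive}. That lemma makes this action factor through $\G$ and be transitive, so $R(S,D)\cong\G/\G'$ and $f$ is the connected covering of $\Mcal(12)$ attached to $\G'\subset\G$ (pulled back along $\Mod(S,D)\twoheadrightarrow\G$). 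On the other side $\BB^\circ$ carries $\G$ as deck group over $\BB_\G^\circ$, so $\BB_{\G'}^\circ=\G'\backslash\BB^\circ\to\BB_\G^\circ$ is by construction the connected covering attached to the same subgroup $\G'$. Two connected coverings of $\Mcal(12)$ cut out by the same subgroup of $\pi_1$ are canonically isomorphic, and as the covering maps are holomorphic the isomorphism is biholomorphic, giving $\Mcal_4(g^1_3)^\circ\cong\BB_{\G'}^\circ$.

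The hard part will be the monodromy identification: upgrading the fibrewise bijection to an isomorphism of coverings. Concretely I must check that $f$ is \'etale, i.e.\ that the family of degree $3$ covers over $\Mcal(12)$ is locally trivial, which rests on the rigidity of simply branched covers with fixed branch divisor, and that the boundary behaviour is compatible. For the latter a loop around $\Dcal$ should lift in $\BB^\circ$ to a loop realising $T_\delta^3$, matching $\G=\Mod(S,D)/\langle\langle T_\delta^3\rangle\rangle$ and the fact (from the proof of Lemma~\ref{lemma:transitive}) that $T_\delta$ acts on $R(S,D)$ with order dividing $3$; this is what forces the kernels of the two monodromy representations to agree. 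A secondary task is to match the Deligne--Mumford stack structures rather than merely the coarse spaces: I would verify that $\aut(X,P)$ corresponds to the $\G'$-stabiliser of the associated point of $\BB^\circ$, so that the automorphism groups agree pointwise (and are generically trivial).
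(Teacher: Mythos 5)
Your proposal is correct and is essentially the paper's own (largely implicit) argument: the corollary is obtained exactly by viewing $\Mcal_4(g^1_3)^\circ$ and $\BB_{\G'}^\circ$ as the covering of $\BB_\G^\circ\cong\Mcal(12)$ determined by the stabilizer $\G'$ of $r\in R(S,D)$ under the transitive monodromy action of Lemma~\ref{lemma:transitive}, with the Riemann existence theorem identifying the fibre with $R(S,D)$ and the relation $T_\delta^3=1$ in $\G$ matching the kernels, precisely the points you spell out. You merely make explicit the covering-space and stack bookkeeping the paper leaves to the reader (and your degree $(3^{10}-1)/2$ agrees with the count $\#R(S,D)$ and the abstract, where the paper's ``index $(3^9-1)/2$'' is evidently a typo).
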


In order to get all of $\Mcal_4(g^1_3)$ we must allow the discriminant divisor $D$ to become nonreduced. What we must exclude is that 
two points of $D$ lie in a disk $\Delta\subset P$ (which contains no other points of $D$) over whose  boundary the covering $X\to P$ is trivial, for if we let these points then coalesce inside $\Delta$ to a point $z\in \Delta$, this 
will create an ordinary double point of the covering. 
There is however one case that we should not throw out: if the covering has in addition a section over $P\ssm \Delta$, 
then the degeneration will be the union of a smooth double covering $X'\to P$ (whose discriminant is 
$D\cap (P\ssm \Delta)$) and a component $\tilde P$ which maps isomorphically onto $P$ and meets $X'$ transversally over  $z$. So this amounts to specifying a hyperelliptic curve of genus $4$ and a point on that curve; in other words, and element of $\Hcal_4(g^1_3)$.
This happens for example when  $\rho(c_0)=\rho(c_1)\not=\rho(c_3)=\dots =\rho(c_{11})$ and 
we take for $\Delta$ a thin regular neighborhood $\g_0\cup \g_1$.  

We can state this in terms of the $\G$-action on $R(S,D)$. The confluence of two points along an arc $\delta$ as above represents  a triflection in $\G$. It may or may not act trivially on $R(S,D)$. If the action is nontrivial, then 
this confluence  creates a point of total ramification and we still have defined a genus $4$ covering of $S$. Otherwise
this creates an ordinary double point. These three cases translate into saying that the stabilizer $\G'$
has three orbits in the collection of mirrors. Equivalently, the preimage of the confluence divisor $\Dcal$ in $\BB_\G$ under the natural map 
\[
\BB_{\G'}\to \BB_\G
\]
has three irreducible components $\Dcal_{\ram}, \Dcal_{\sg},\Dcal_{\hyp}$ whose generic points are characterized topologically by the degree 3 covering  acquiring  respectively (\emph{rm}) a point of total ramification, (\emph{sg}) an ordinary  double point whose complement is connected and ($\hyp$)  an ordinary  double point which disconnects. We can can now improve upon Corollary \ref{cor:ballquotient1} as follows.

\begin{theorem}\label{thm:ballquotient2}
The  above construction gives an identification 
\[
\Mcal_4(g^1_3)\cong \BB_{\G'}\ssm \Dcal_{\sg},
\]
thus endowing  the left hand side with an (incomplete) complex-hyperbolic metric.
This identifies the  preimage of $\Dcal_{\hyp}$ with the universal hyperelliptic curve of genus $4$. 
\end{theorem}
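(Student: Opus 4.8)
The plan is to upgrade the open isomorphism of Corollary~\ref{cor:ballquotient1} by analysing the degenerations encoded in the three boundary divisors $\Dcal_{\ram}$, $\Dcal_{\sg}$ and $\Dcal_{\hyp}$, and then to glue the resulting pieces into a global isomorphism. Concretely, I would produce two mutually inverse morphisms: a classifying (``curve-building'') map $\BB_{\G'}\ssm\Dcal_{\sg}\to\Mcal_4(g^1_3)$ extending the isomorphism over $\BB_{\G'}^\circ$, and a period map $\Mcal_4(g^1_3)\to\BB_{\G'}$ whose image lands in $\BB_{\G'}\ssm\Dcal_{\sg}$; since they agree on the dense open $\BB_{\G'}^\circ$ they must be inverse to one another wherever both are defined. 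The Deligne--Mostow isomorphism $\Mcal(12)^\st\xrightarrow{\cong}\BB_\G$ already equips $\BB_{\G'}$, via the finite cover $\BB_{\G'}\to\BB_\G$, with a universal stable degree-$12$ divisor $D$ on $P\cong\PP^1$ together with the monodromy datum $r$ of Lemma~\ref{lemma:transitive}; over $\BB_{\G'}^\circ$ the datum $r$ reconstructs the degree-$3$ cover $X\to P$, so all the work is concentrated along the boundary.

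First I would treat $\Dcal_{\ram}$. At a generic point two points of $D$ collide along an arc $\delta$ whose two simple loops carry \emph{distinct} transpositions, so the triflection $T_\delta$ acts nontrivially on $R(S,D)$. The local model of the degree-$3$ cover is then $w\mapsto w^3$, so $X$ stays smooth while $X\to P$ acquires a single point of total ramification and $D$ a point of multiplicity $2$. Thus the universal family of covers extends to a flat family of smooth nonhyperelliptic genus-$4$ curves equipped with a $g^1_3$, yielding a morphism on (a generic part of) $\Dcal_{\ram}$ into $\Mcal_4(g^1_3)$ that prolongs the open isomorphism. Next comes $\Dcal_{\hyp}$, which is the delicate case. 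Here the two colliding loops carry \emph{equal} transpositions and the cover splits over the complement of a disk $\Delta$, so the limiting cover degenerates to $X'\cup\tilde P$, where $X'\to P$ is the double cover branched along the residual $10$ points and $\tilde P\cong P$ meets $X'$ transversally at the collision point $x$. By Riemann--Hurwitz $X'$ is a smooth hyperelliptic curve of genus $4$, and the pair $(X',x)$ is precisely a point of the universal hyperelliptic curve $\Hcal_4(g^1_3)$ over $\Hcal_4$, with $g^1_3=x+g^1_2$ exactly as in Lemma~\ref{lemma:pencils}. I would check that matching the residual $10$-point discriminant with the hyperelliptic double cover, and the collision point with the marked point, identifies $\Dcal_{\hyp}$ with the universal hyperelliptic curve of genus $4$; this is the content of the second assertion.

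Finally, over a generic point of $\Dcal_{\sg}$ the equal-transposition collision has connected complement, producing a non-separating node on an irreducible curve of arithmetic genus $4$ and geometric genus $3$. This is not a smooth genus-$4$ curve, so such points must be excised, which is exactly why $\Dcal_{\sg}$ is removed, and it confirms that the period map of $\Mcal_4(g^1_3)$ avoids $\Dcal_{\sg}$. To globalise, note that the two morphisms agree on $\BB_{\G'}^\circ$, hence are mutually inverse over the codimension-one strata of $\Dcal_{\ram}$ and $\Dcal_{\hyp}$ just analysed; since both $\BB_{\G'}\ssm\Dcal_{\sg}$ and the smooth stack $\Mcal_4(g^1_3)$ are normal, an isomorphism defined away from a set of codimension $\ge 2$ extends uniquely, so the two maps are inverse isomorphisms everywhere.

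The main obstacle is the boundary analysis of the two middle paragraphs. On the one hand one must verify that the family of genus-$4$ curves remains flat with \emph{smooth} special fibre over $\Dcal_{\ram}$ and that the classifying map extends holomorphically across the divisor --- where the Hodge structure degenerates but, by Deligne--Mostow, the ball point stays well defined, so the period map remains a morphism. On the other hand one must carry out the reinterpretation over $\Dcal_{\hyp}$ carefully enough to see that the reducible limit $X'\cup\tilde P$ corresponds to the \emph{smooth} pair $(X',x)$ in such a way that the induced map is an \emph{isomorphism} onto the universal hyperelliptic curve, and not merely a bijection on points; this is where the matching of the residual discriminant and monodromy data with the hyperelliptic structure must be made precise.
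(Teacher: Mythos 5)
Your proposal is correct and follows essentially the paper's own route: the theorem there is justified precisely by the discussion preceding it, which extends the Deligne--Mostow identification over the confluence divisor by sorting collisions of two discriminant points into the three types (distinct transpositions giving total ramification on a smooth curve, equal transpositions with connected complement giving a nonseparating node to be excised with $\Dcal_{\sg}$, and equal transpositions with a section over $P\ssm\Delta$ giving the separating limit $X'\cup\tilde P$, read off as a hyperelliptic genus $4$ curve with a marked point, i.e.\ a point of $\Hcal_4(g^1_3)$). Your boundary analysis reproduces this case division exactly, and your gluing remarks supply somewhat more detail than the paper itself does (the paper offers no formal proof here, instead giving a second, level-structure approach later for a sharper statement).
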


For a different approach to this theorem (that  ultimately yields a more precise statement), we first need to discuss level 3 structures for the Eisenstein lattice $L$.

\subsection*{The approach via a  level structure}
We  begin with the observation that $\Ecal/\theta\Ecal$ is a $\FF_3$-vector space of dimension one on which $\tau\in\mmu_6$ acts as minus the identity (because $\tau +1=\tau^{-1}\theta$).
It follows that $\FF_3\otimes L=L/\theta L$ is a $\FF_3$-vector space of rank 10 on which the skew-hermitian form $h'$ induces a symplectic form
$h'_{\FF_3}:=\FF_3\otimes h'$. The following lemma generalizes an argument of Allcock (appearing in the proof of Thm.\ 5.2 of \cite{allcock}).

\begin{lemma}\label{lemma:mod3}
The symplectic form $h'_{\FF_3}$ is nondegenerate on  $\FF_3\otimes L$ (so that it is isomorphic to $\FF_3^{10}$ equipped with its standard symplectic form)  and $\G$ acts on $\FF_3\otimes L$  with image its full symplectic group (a copy of 
$\Sp_{10}(\FF_3)$). The image  of a triflection is a symplectic transvection.
\end{lemma}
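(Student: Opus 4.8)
The plan is to treat the three assertions in the order stated, deriving each from the explicit Gram matrix of $h'$ in the basis $a_{\delta_1},\dots,a_{\delta_{10}}$ and from the congruence $\tau\equiv -1\pmod\theta$ recorded just before the lemma.

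\emph{Nondegeneracy.} First I would reduce the Gram matrix modulo $\theta$. From the displayed values $h(a_{\delta_i},a_{\delta_i})=-3=\theta^2$ and $h(a_{\delta_i},a_{\delta_{i\pm1}})=\pm\theta$ one gets $h'(a_{\delta_i},a_{\delta_i})=\theta\equiv 0$ and $h'(a_{\delta_i},a_{\delta_{i\pm1}})=\pm1$, all remaining entries being $0$. Hence in the basis $\{\bar a_{\delta_i}\}$ the matrix of $h'_{\FF_3}$ is the tridiagonal alternating matrix with $0$ on the diagonal and $\pm1$ on the two off-diagonals. Its determinant satisfies the two-step recursion $D_k=D_{k-2}$ (the relevant off-diagonal product being $-1$ in each case), so $D_{10}=D_2=1\neq 0$; thus $h'_{\FF_3}$ is nondegenerate, and a nondegenerate alternating form over $\FF_3$ is the standard symplectic form on $\FF_3^{10}$.

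\emph{Triflections are transvections.} Reducing the formula $s_\delta(x)=x+\tau\,h'(x,a_\delta)a_\delta$ modulo $\theta$ and using $\tau\equiv-1$ gives $\bar s_\delta(\bar x)=\bar x-h'_{\FF_3}(\bar x,\bar a_\delta)\,\bar a_\delta$, which is exactly a symplectic transvection with centre the line $\FF_3\bar a_\delta$, provided $\bar a_\delta\neq 0$. The latter holds because each $\bar a_{\delta_i}$ is a basis vector of $L/\theta L$, and any $a_\delta$ is carried to some $a_{\delta_i}$ by an element of $\Mod(S,D)$ up to a covering transformation (which reduces to $\pm1$), so its reduction is a nonzero multiple of a basis vector.

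\emph{Surjectivity onto $\Sp_{10}(\FF_3)$.} Since $T_{\delta_1},\dots,T_{\delta_{10}}$ generate $\Mod(S,D)$ and $\G$ is the corresponding quotient, the image of $\G$ in the symplectic group is generated by the ten transvections $\bar s_{\delta_i}$, whose centres $\bar a_{\delta_i}$ form a basis whose intersection pattern is the connected path ($h'_{\FF_3}(\bar a_{\delta_i},\bar a_{\delta_j})\neq0$ iff $|i-j|=1$). I would first prove irreducibility: if $W\neq 0$ is invariant, pick $0\neq w\in W$; by nondegeneracy some $h'_{\FF_3}(w,\bar a_{\delta_i})\neq0$, so $\bar s_{\delta_i}(w)-w$ is a nonzero multiple of $\bar a_{\delta_i}$ and hence $\bar a_{\delta_i}\in W$; applying the neighbouring transvections and using connectedness propagates this to every $\bar a_{\delta_j}$, forcing $W$ to be everything. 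Being an irreducible subgroup of $\Sp_{10}(\FF_3)$ generated by transvections, the image is then the full group by the classification of such groups (McLaughlin). This last step is where the real work lies: one must check that genuine order-$3$ transvections in dimension $10$ over $\FF_3$ avoid the short list of exceptions, which occur only over $\FF_2$ (where transvections are reflection-like) or in very low dimension. This is the step generalizing \cite[Thm.~5.2]{allcock}; alternatively, in Allcock's more hands-on style, one can show directly that the $A_{10}$-chain of transvections is transitive on $\PP^9(\FF_3)$, so that $\G$ contains a transvection along every line, and then invoke the classical fact that all symplectic transvections together generate $\Sp_{10}(\FF_3)$.
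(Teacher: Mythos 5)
Your proposal is correct, and its first two steps (reducing the Gram matrix and the triflection formula modulo $\theta$, using $\tau\equiv -1$) coincide with the paper's proof --- indeed your explicit determinant recursion $D_k=D_{k-2}$, $D_{10}=D_2=1$, makes the nondegeneracy claim more explicit than the paper does, and your reading of the Gram matrix is the right one (the paper's ``$\pm1$ if $j=\pm i$'' is evidently a typo for $j=i\pm 1$). Where you genuinely diverge is the surjectivity step. You stay entirely over $\FF_3$: you prove irreducibility by the standard connected-chain argument (correct as written) and then invoke the McLaughlin/Zalesskii--Serezhkin classification of irreducible groups generated by transvections. The paper instead lifts the configuration to characteristic zero: it observes that $\alpha_1,\dots,\alpha_{10}$ with this Gram matrix is the mod $3$ reduction of the integral symplectic vanishing lattice of the $A_{10}$ plane curve singularity $z_1^2+z_2^{11}=0$, and quotes A'Campo \cite{ac}: the subgroup of $\Sp_{10}(\ZZ)$ generated by the corresponding transvections contains the principal level $2$ congruence subgroup, which surjects onto $\Sp_{10}(\FF_3)$. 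The trade-off: the paper's route replaces a heavy classification theorem by one concrete, citable result from singularity theory (and the unimodularity of the $A_{10}$ Milnor lattice gives nondegeneracy for free); your route is self-contained linear algebra over $\FF_3$ plus a black box, but that black box requires exactly the care you flag --- note in particular that over a prime field a single transvection already generates its full root group, so McLaughlin's hypotheses do apply, and the potentially worrying unitary examples $\SU_5(\FF_9)\subset\Sp_{10}(\FF_3)$ are harmless because $\FF_9$-unitary transvections have $(g-1)$ of $\FF_3$-rank $2$ and so are not $\FF_3$-transvections; the genuine exceptions in the classification live in characteristic $2$, as you say. Your alternative sketch (transitivity of the $A_{10}$-chain on $\PP^9(\FF_3)$, hence a transvection along every line, hence all of $\Sp_{10}(\FF_3)$) is also viable and is closest in spirit to Allcock's own Theorem 5.2, but as presented it is only a sketch: the transitivity claim is the whole content there and would need a proof, whereas in the paper that transitivity is obtained independently and topologically (Lemma \ref{lemma:transitive}) rather than inside this lemma.
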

\begin{proof}
The basis $a_{\delta_1},\dots , a_{\delta_{10}}$ of $L$ maps to a basis $\alpha_1, \dots, \alpha_{10}$ of $\FF_3\otimes L$ with the property that 
$h'_{\FF_3}(\alpha_i, \alpha_j)$ is $\pm 1$ if $j=\pm i$ and is zero otherwise. The  triflection defined by $a_{\delta_i}$ becomes $x\mapsto x+\tau h'_{\FF_3}(x, \alpha_i)\alpha_i=  x-h'_{\FF_3}(x, \alpha_i)\alpha_i$, which is indeed a  symplectic transvection. 
Let $V$ be  the free abelian group with basis $\tilde\alpha_1, \dots, \tilde\alpha_{10}$ endowed with the symplectic form that assigns to the pair  $(\tilde\alpha_i, \tilde\alpha_j)$ the value  $\pm 1$ if $j=\pm i$ and is zero otherwise, so that its reduction mod $3$ gives 
$h'_{\FF_3}$. This lattice is well-known in singularity theory (it is the one  for the 
Milnor fiber of a plane curve singularity of type $A_{10}$, defined by $z_1^2+z_2^{11}=0$ as given on 
a standard basis  of vanishing cycles). It was proved by A'Campo \cite {ac} that the subgroup of $\Sp(V)$ generated by these symplectic transvections contains the principal congruence level 2 subgroup. This implies that it maps onto the full symplectic  group of its mod $3$ reduction.
\end{proof}

The covering group $\mmu_6$ acts on 
the 10-dimensional $\FF_3$-vector space  $\FF_3\otimes L$ by scalars  and so the  
9-dimensional projective space $\PP(\FF_3\otimes L)$ only depends on the pair  $(S,D)$. 

We regard $L$ as a primitive subgroup of $H^1(\Sigma)$, so that a one dimensional  subspace  $\ell\subset \FF_3\otimes L$ defines (by restriction) a surjection 
$H_1(\Sigma)\cong \Hom (H^1(\Sigma), \ZZ) \to \Hom(\ell, \FF_3)=\ell^\vee$. This surjection is $\mmu_6$-equivariant if we let $\mmu_6$ act on $\ell^\vee$ via the obvious character $\mmu_6\twoheadrightarrow\mmu_2=\{\pm1\}$. This also yields an unramified $\ell^\vee$-covering 
$\Sigma_\ell\to \Sigma$.

\begin{lemma}\label{lemma:}
The $\mmu_6$-action on $\Sigma$ lifts to $\Sigma_\ell$. If $\Sigma'_\ell$ denotes the orbit
space of this lift, then $\Sigma'_\ell$ is an oriented surface of genus 4 such that the obvious map  
$\Sigma'_\ell\to S$ is orientation preserving of degree $3$ with discriminant $D$ (in other words, defines an element of $R(S,D)$). Moreover, this degree $3$ cover is  
is independent of the lift of the $\mmu_6$-action, in the sense that for two any lifts, the two degree 3 covers  of $S$ are isomorphic by a unique isomorphism.
\end{lemma}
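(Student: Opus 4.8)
The plan is to organize everything around the group extension that governs lifts of the action, and to read the branching off a purely local computation at the twelve fixed points. Write $\ell^\vee\cong\FF_3$ for the deck group of the connected unramified covering $\Sigma_\ell\to\Sigma$; by the very equivariance built into the surjection $H_1(\Sigma)\to\ell^\vee$, the group $\mmu_6$ acts on $\ell^\vee$ through $\chi\colon\mmu_6\twoheadrightarrow\mmu_2$ by inversion. The homeomorphisms of $\Sigma_\ell$ that cover some element of $\mmu_6$ form a group $\tilde G$ fitting in an extension
\[
1\longrightarrow \ell^\vee\longrightarrow \tilde G\longrightarrow \mmu_6\longrightarrow 1,
\]
in which each $g\in\mmu_6$ does lift because $g_*$ preserves the kernel of $H_1(\Sigma)\to\ell^\vee$. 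A lift of the $\mmu_6$-action is then exactly a splitting of this extension, and two lifts that are conjugate under $\ell^\vee$ yield the same quotient up to a canonical isomorphism over $S$.

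First I would settle existence and essential uniqueness of the lift cohomologically. Writing $\mmu_6\cong\mmu_2\times\mmu_3$, the spectral sequence for $1\to\mmu_3\to\mmu_6\to\mmu_2\to1$ degenerates because $2$ is invertible on the $\FF_3$-coefficients, identifying $H^n(\mmu_6,\ell^\vee)$ with $H^n(\mmu_3,\ell^\vee)^{\mmu_2}$. Since conjugation by $\tau$ is trivial on the abelian group $\mmu_3$ but is $-1$ on $\ell^\vee$, it acts as $-1$ on every $H^n(\mmu_3,\ell^\vee)$, whose invariants therefore vanish. Hence $H^n(\mmu_6,\ell^\vee)=0$ for all $n$. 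In particular $H^2=0$ produces a splitting, so the action lifts; $H^1=0$ makes any two splittings conjugate under $\ell^\vee$; and the vanishing of $(\ell^\vee)^{\mmu_6}$ forces the conjugating deck transformation to be unique. This is precisely the asserted independence of the lift together with the uniqueness of the isomorphism between the two quotients.

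The heart of the matter is the local picture over a fixed point $p_i$ lying over $d_i\in D$, which forces the branching to be simple. Fix a lift with generator $\tilde\tau$, and let $t_c$ denote the deck translation by $c\in\ell^\vee$. Equivariance gives the conjugation relation $\tilde\tau\,t_c\,\tilde\tau^{-1}=t_{-c}$, and since $\Sigma_\ell\to\Sigma$ is trivial over a small $\tau$-invariant disc about $p_i$, the induced permutation $\sigma_i$ of the three sheets satisfies $\sigma_i(j+c)=\sigma_i(j)-c$, i.e.\ $\sigma_i(j)=a_i-j$. This affine involution has slope $-1\ne 1$ in $\FF_3$, hence is always a transposition: $\tilde\tau$ fixes exactly one point over $p_i$ and swaps the other two, independently of the lift. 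On the fixed sheet $\tilde\tau$ acts by the same rotation as $\tau$ near $p_i$, so that point is a genuine $\mmu_6$-fixed point mapping to an \emph{unramified} point of $\Sigma'_\ell\to S$; on the swapped pair the stabilizer is $\mmu_3=\langle\tau^2\rangle$, acting there by rotation of order $3$, so $\Sigma'_\ell$ is smooth and the local model $w=z^6$, $u=z^3$ exhibits $\Sigma'_\ell\to S$ as $u\mapsto w=u^2$. Thus over each $d_i$ the degree-$3$ map has exactly one simple ramification point and is unramified elsewhere.

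With this local model the global assertions are immediate. As $\Sigma$ is oriented and $\mmu_6$ acts by orientation-preserving diffeomorphisms, $\Sigma_\ell$ and the quotient $\Sigma'_\ell$ inherit orientations, and $\Sigma'_\ell\to S$ is an orientation-preserving degree-$3$ cover, connected because $\Sigma_\ell$ is, branched exactly over $D$ with each branch a transposition; its monodromy is transitive and generated by transpositions, hence all of $\Scal_3$, so it determines an element of $R(S,D)$. Riemann-Hurwitz with twelve simple branch points gives $2-2g=3\cdot 2-12=-6$, so $\Sigma'_\ell$ has genus $4$. The step deserving the most care, and the main obstacle, is the lifting itself: one must treat a lift as a splitting of the whole extension rather than an element-by-element lift, verify the cohomological vanishing, and extract the transposition behaviour at each $p_i$ from the conjugation relation, since it is exactly this that makes the branching simple and pins down the genus.
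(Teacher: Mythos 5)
Your proposal is correct, and the second half of it coincides with the paper's argument: the paper likewise analyzes the fiber $\Sigma_\ell(z)$ over $z\in D$ as an affine line over $\ell^\vee$ on which the lifted action has linear part $-1$, concludes that $\tilde\tau$ acts there as a transposition (your computation $\sigma_i(j)=a_i-j$ is exactly this), and deduces simple ramification over each point of $D$, unramifiedness at the image of the fixed sheet, discriminant $D$, and genus $4$ via Riemann--Hurwitz. Where you genuinely diverge is the lifting step. The paper produces the lift directly and constructively: it chooses a $\mmu_6$-fixed point $x\in\Sigma$ as base point, realizes $\Sigma_\ell$ as a quotient of the space of paths starting at $x$, and uses the $\mmu_6$-equivariance of $\pi_1(\Sigma,x)\to\ell^\vee$ to see that the evident $\mmu_6$-action on the path space descends; it then merely asserts that two lifts differ by an $\ell^\vee$-covering transformation. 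You instead encode lifts as splittings of the extension $1\to\ell^\vee\to\tilde G\to\mmu_6\to1$ and compute $H^n(\mmu_6,\ell^\vee)=0$ for all $n$, which simultaneously yields existence ($H^2$), conjugacy of any two lifts ($H^1$), and uniqueness of the conjugating deck transformation ($H^0$). This is heavier machinery, but it buys an actual proof of the conjugacy statement that the paper leaves implicit, and your computation correctly isolates the essential input --- the nontriviality of the $\mmu_6$-action on $\ell^\vee$ (with trivial action one would have $H^n(\mmu_6,\FF_3)\cong H^n(\mmu_3,\FF_3)\neq 0$, so no general order argument suffices). One small caveat: the lemma's ``unique isomorphism'' is, strictly speaking, the statement that the degree-$3$ cover admits no nontrivial automorphism over $S$; your $H^0$-vanishing gives only a canonical isomorphism (uniqueness of the conjugating deck transformation), whereas genuine uniqueness follows from the monodromy being all of $\Scal_3$, since the point stabilizer $\Scal_2$ is self-normalizing. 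You do establish full monodromy at the end, and the paper closes the same gap by the same observation (``$\Sigma'_\ell$ has no covering transformations''), so this is a matter of attribution rather than a gap.
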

\begin{proof}
We first show that the $\mmu_6$-action on $\Sigma$ lifts to $\Sigma_\ell$.
Recall that if we choose  a base point $x\in \Sigma$, then $\Sigma_\ell$ can be obtained as a 
quotient of the space of paths in $\Sigma$ that begin at $x$: two  such paths $\alpha, \beta$ 
define the same point of $\Sigma_\ell$ if and only if $\alpha$ and $\beta$ have the same endpoint 
and the class of the loop  $\beta^{-1}*\alpha$ in $\pi_1(\Sigma, x)$ has zero image in $\ell^\vee$. 
If we choose $x$ to be fixed point of the $\mmu_6$-action, then $\mmu_6$ will act on the 
space of such of paths. The homomorphism $\pi_1(\Sigma, x)\to \ell^\vee$ is $\mmu_6$-equivariant, in particular, the $\mmu_6$-action on $\pi_1(\Sigma, x)$ preserves its kernel. So $\mmu_6$ preserves the above equivalence relation, meaning that  we have lifted the  
$\mmu_6$-action on $\Sigma$ to $\Sigma_\ell$. 

This lift is not canonical, but two such lifts differ by an $\ell^\vee$-covering transformation.  
Hence the isomorphism type of the $\mmu_6$-orbit space $\Sigma_\ell'$  of $\Sigma_\ell$, now viewed as a  degree $3$ cover of $S$, is independent of the lift. Note that $\Sigma_\ell'$  is a connected oriented surface, not ramified  over 
$S\ssm D$. To see  what happens over $z\in D$, we observe that the  fiber $\Sigma_\ell(z)$ 
of $\Sigma_\ell\to S$ over $z$ has the structure of an affine line over $\ell^\vee$ on which 
$\mmu_6$ also acts  and in such a manner that the induced action on $\ell^\vee$ is minus the identity.
This means that $\mmu_6$  acts as transposition on $\Sigma_\ell(z)$: it fixes a point and 
exchanges the  remaining two. It follows  that $\Sigma'_\ell\to S$ has no ramification at the image 
of the  fixed point of the  lifted $\mmu_6$-action, whereas the common image of the remaining pair 
is a point of simple ramification. It follows that $D$ is  the discriminant of $\Sigma'_\ell\to S$ and that $\Sigma'_\ell$ has genus 4.  This also shows that $\Sigma'_\ell$ has no covering 
transformations. So $\Sigma'_\ell$ is unique up to unique isomorphism. 
\end{proof}

\begin{theorem}\label{thm:main}
The map which assigns to $[\ell]\in\PP (\FF_3\otimes L)$  the cover $\Sigma'_\ell\to S$
defines a $\Mod(S,D)$-equivariant bijection 
\[
[\ell]\in {\PP}(\FF_3\otimes L)\mapsto r(\Sigma'_\ell/S) \in R(S,D)
\]
In particular, $R(S,D)$ thus acquires the structure of the  projective space of a 10-dimensional symplectic space over $\FF_3$ on which $\G$ acts through  
a symplectic group modulo its center.
\end{theorem}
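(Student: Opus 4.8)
The plan is to establish three facts in turn: that the map is well-defined, that it is $\Mod(S,D)$-equivariant, and that it is bijective. Bijectivity will then drop out formally, since both sides carry transitive $\G$-actions and have the same finite cardinality. Well-definedness is already furnished by the preceding lemma: each line $\ell\subset\FF_3\otimes L$ yields a degree $3$ cover $\Sigma'_\ell\to S$ with discriminant $D$, unique up to unique isomorphism, hence a well-defined class $r(\Sigma'_\ell/S)\in R(S,D)$ depending only on $[\ell]$.

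For equivariance I would trace the naturality of the assignment $\ell\mapsto\Sigma'_\ell$. Represent a class in $\Mod(S,D)$ by $\phi\in\Diff^+(S,D)$ and lift it to a diffeomorphism $\tilde\phi$ of $\Sigma$ commuting with the $\mmu_6$-action (unique up to a covering transformation). Then $\tilde\phi_*$ preserves the $\mmu_6$-module structure and the intersection pairing on $H_1(\Sigma)$, so it acts on $\FF_3\otimes L$ as the very symplectic transformation through which $\Mod(S,D)$ acts on $\PP(\FF_3\otimes L)$, carrying $[\ell]$ to $[\tilde\phi_*\ell]$. Because $\tilde\phi_*$ preserves $h'_{\FF_3}$, the character $H_1(\Sigma)\to\ell^\vee$ cut out by $\ell$ is carried to the character cut out by $\tilde\phi_*\ell$ (under the natural identification $\ell^\vee\cong(\tilde\phi_*\ell)^\vee$); thus $\tilde\phi$ carries the $\mmu_6$-equivariant cover $\Sigma_\ell\to\Sigma$ to $\Sigma_{\tilde\phi_*\ell}\to\Sigma$, and on $\mmu_6$-orbit spaces $\tilde\phi$ descends to $\phi$ and identifies $\Sigma'_\ell$ with $\Sigma'_{\tilde\phi_*\ell}$ over $S$. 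This says exactly $\phi\cdot r(\Sigma'_\ell/S)=r(\Sigma'_{\tilde\phi_*\ell}/S)$. Since both actions factor through $\G$ (Lemma \ref{lemma:transitive} on the right, and the action on $L$ through $\G$ on the left), the map is $\G$-equivariant.

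To finish, I would combine transitivity on the two sides. By Lemma \ref{lemma:transitive}, $\G$ is transitive on $R(S,D)$. By Lemma \ref{lemma:mod3}, $\G$ acts on $\FF_3\otimes L$ through the full $\Sp_{10}(\FF_3)$, which is transitive on nonzero vectors (Witt's theorem) and therefore on the points of $\PP(\FF_3\otimes L)$. An equivariant map between transitive $\G$-sets has nonempty $\G$-stable image, hence is surjective; as both sets have cardinality $(3^{10}-1)/2=\#\PP^9(\FF_3)$ (computed earlier), surjectivity forces bijectivity. The closing assertion then follows at once: the bijection transports the symplectic-projective structure of $\PP(\FF_3\otimes L)$ to $R(S,D)$, and under it the $\G$-action becomes the action through $\Sp_{10}(\FF_3)$ modulo its center $\{\pm1\}$, i.e.\ through $\mathrm{PSp}_{10}(\FF_3)$.

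I expect the equivariance step to be the genuine obstacle, and within it the bookkeeping of noncanonical choices: the lift $\tilde\phi$ of $\phi$ to $\Sigma$ and the lift of the $\mmu_6$-action to $\Sigma_\ell$ are each defined only up to a covering transformation, so one must check that these ambiguities cancel and that the direction of the induced map on characters is tracked correctly, so that the resulting correspondence on $R(S,D)$ is honestly well-defined and agrees with the given $\Mod(S,D)$-action. Everything else—the cardinality match and the transitivity on each side—is already supplied by the earlier lemmas, so once naturality is secured the bijection and the symplectic structure are immediate.
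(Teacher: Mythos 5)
Your proposal is correct and takes essentially the same route as the paper's proof: equivariance is read off from the naturality of the construction $\ell\mapsto\Sigma'_\ell$, and bijectivity then follows formally from the transitivity of $\G$ on $R(S,D)$ (Lemma \ref{lemma:transitive}) together with the equal cardinalities $(3^{10}-1)/2$, with the symplectic statement supplied by Lemma \ref{lemma:mod3}. Your careful bookkeeping of the noncanonical lifts merely spells out what the paper compresses into ``clear from the construction.''
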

\begin{proof}
The $\Mod(S,D)$-equivariance of this assignment  is clear from the construction. The theorem then follows from the fact that the two sets have the same cardinality and that $\Mod(S,D)$ acts transitively on the target. The rest follows from Lemma \ref{lemma:mod3}. 
\end{proof}

\subsection*{The confluence divisors} Theorem \ref{thm:main} also explains why the preimage of $\Dcal\subset\BB_\G$ in $\BB_{\G'}$ has three irreducible components, to wit, the confluence divisors $\Dcal_\Hcal$, $\Dcal_{\ram}$ and $\Dcal_{\sg}$. Let us first observe that  a mirror is given by the $\Ecal$-span of  $(-3)$-vector of  $L$ and  that the mod $\theta$-reduction of  the latter
determines a line  in $\FF_3\otimes L$ and hence a point in $\PP(\FF_3\otimes L)$.

\begin{proposition}\label{prop:}
Let $\ell$ be a line in $\FF_3\otimes L$. Then the three orbits of $\G_\ell$ in 
$\PP(\FF_3\otimes L)$  are  $[\ell]$, $\PP(\ell^\perp)\ssm \{[\ell]\}$ and  $\PP(\FF_3\otimes L)\ssm \PP(\ell^\perp)$ and represent respectively  the irreducible components $\Dcal_\Hcal$, $\Dcal_{\ram}$ and $\Dcal_{\sg}$ of the preimage  of the confluence divisor $\Dcal$ of $\BB_\G$ in $\BB_{\G_\ell}$.
\end{proposition}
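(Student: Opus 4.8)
The plan is to split the statement into a purely symplectic orbit computation over $\FF_3$ and a dictionary that reads off, from the relative position of $\ell_\delta$ and $\ell$, which degeneration of the degree $3$ cover $\Sigma'_\ell\to S$ a given mirror produces.

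First I would settle the orbit count. By Lemma~\ref{lemma:mod3} the form $h'_{\FF_3}$ is a nondegenerate alternating form, so every line is isotropic and Witt's extension theorem applies: $\Sp_{10}(\FF_3)$ acts transitively on the ordered pairs of lines of each relative position, namely $m=\ell$; $m\neq\ell$ with $h'_{\FF_3}(\ell,m)=0$ (spanning a totally isotropic plane with $\ell$); and $h'_{\FF_3}(\ell,m)\neq0$ (spanning a hyperbolic plane with $\ell$). Since $\ell\subseteq\ell^\perp$, the first two cases fill out $\PP(\ell^\perp)$, so the stabilizer of $[\ell]$ has exactly the three orbits $\{[\ell]\}$, $\PP(\ell^\perp)\ssm\{[\ell]\}$ and $\PP(\FF_3\otimes L)\ssm\PP(\ell^\perp)$. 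Because the $\G$-action on $\PP(\FF_3\otimes L)$ factors through $\Sp_{10}(\FF_3)$ modulo its centre and is onto it (Lemma~\ref{lemma:mod3}), the subgroup $\G_\ell$ maps onto the stabilizer of $[\ell]$ and therefore has these very three orbits.

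Next I would build the dictionary with the confluence components. A mirror is cut out by a $(-3)$-vector $a_\delta$, whose reduction spans $\ell_\delta=[\bar a_\delta]$, and $s_\delta$ acts on $\FF_3\otimes L$ as the transvection $x\mapsto x-h'_{\FF_3}(x,\bar a_\delta)\bar a_\delta$ with centre $\ell_\delta$. Under the $\Mod(S,D)$-equivariant identification $R(S,D)\cong\PP(\FF_3\otimes L)$ of Theorem~\ref{thm:main}, the braid $T_\delta$ acts on $r=[\ell]$ through $s_\delta$, so $T_\delta$ fixes $\Sigma'_\ell$ iff $s_\delta$ fixes $[\ell]$; a one-line transvection computation shows the latter holds iff $h'_{\FF_3}(\ell,\ell_\delta)=0$, i.e. iff $\ell_\delta\in\ell^\perp$. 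On the other hand, the analysis in the proof of Lemma~\ref{lemma:transitive} identifies $T_\delta$ fixing $\Sigma'_\ell$ with the equality of the two branch transpositions of $\Sigma'_\ell\to S$ at the endpoints of $\delta$. Hence the relative position of $\ell_\delta$ and $\ell$ records whether, upon colliding these two branch points along $\delta$, the monodromies multiply to a $3$-cycle (when $\ell_\delta\notin\ell^\perp$), producing a totally ramified point, or to the identity (when $\ell_\delta\in\ell^\perp$), producing an ordinary double point. Thus $\PP(\FF_3\otimes L)\ssm\PP(\ell^\perp)$ is the stratum $\Dcal_{\ram}$, and $\PP(\ell^\perp)$ is the nodal stratum.

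It remains to split the nodal locus $\ell_\delta\in\ell^\perp$ into its separating (hyperelliptic) and non-separating parts, matching $\{[\ell]\}$ with $\Dcal_\Hcal$ and $\PP(\ell^\perp)\ssm\{[\ell]\}$ with $\Dcal_{\sg}$, and this last distinction is the main obstacle. I would establish it by feeding the explicit model of $\Sigma'_\ell$ from the Lemma preceding Theorem~\ref{thm:main} — where each fibre over $D$ is an affine line over $\ell^\vee$ on which $\mmu_6$ acts as a transposition — into the collision, and verifying that the node separates, so that $\Sigma'_\ell$ degenerates to a double cover $X'$ together with a disjoint section $\tilde P$ (the configuration of $\Hcal_4(g^1_3)$), precisely when $\ell_\delta=\ell$, and is non-separating otherwise. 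Granting that every line arises as some $\ell_\delta$ (again by Lemma~\ref{lemma:mod3} and transitivity on mirrors) and that the degeneration type is constant along $\G_\ell$-orbits, irreducibility of each of $\Dcal_\Hcal,\Dcal_{\sg},\Dcal_{\ram}$ follows from the transitivity established in the first step, completing the identification of the three orbits with the three components.
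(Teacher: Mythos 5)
Your first two steps are correct and, modulo packaging, are the same mechanism as the paper's: the paper does not spell out the Witt-theorem orbit count (it exhibits explicit representatives $\ell_0,\ell_1,\ell_2$ coming from arcs $\delta_0,\delta_1,\delta_2$ instead), but your transvection dictionary -- $T_\delta$ fixes $r=[\ell]$ iff $h'_{\FF_3}(\ell,\ell_\delta)=0$ iff the two endpoint transpositions agree iff the confluence produces a node rather than a total ramification point -- is exactly the content the paper extracts from the proof of Lemma \ref{lemma:transitive} together with Lemma \ref{lemma:mod3} and Theorem \ref{thm:main}. One point worth flagging: your conclusion assigns $\PP(\ell^\perp)\ssm\{[\ell]\}$ to $\Dcal_{\sg}$ and the complement of $\PP(\ell^\perp)$ to $\Dcal_{\ram}$, which \emph{disagrees} with the ``respectively'' in the printed statement but \emph{agrees} with the paper's own proof (there $\ell_1\notin\ell_0^\perp$ since $h(a_{\delta_0},a_{\delta_1})=\pm\theta$, and shrinking $\delta_1$ gives total ramification, while $\ell_2\in\ell_0^\perp\ssm\{\ell_0\}$ gives the nonseparating node). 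The monodromy computation supports your ordering, so the statement as printed appears to have $\Dcal_{\ram}$ and $\Dcal_{\sg}$ interchanged; you got the correct version.

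The genuine gap is your third step. The assertion ``the node separates precisely when $\ell_\delta=\ell$'' is the only nonformal content of the proposition, and you state it as a plan (``I would establish it by feeding the explicit model \dots and verifying'') without an argument. The fiberwise model of $\Sigma_\ell$ -- affine lines over $\ell^\vee$ with $\mmu_6$ acting as a transposition -- only records local data over $D$ and cannot by itself see the global class $\ell$, so ``verifying'' requires a genuinely global input. The paper supplies it as follows: choose standard generators with $\rho(c_0)=\rho(c_1)\neq\rho(c_2)=\cdots=\rho(c_{11})$, so that $\delta_0$ is the separating arc; since $\Sigma'_\ell\to S$ is disconnected over $S\ssm\delta_0$ (a section splits off), the $\ell^\vee$-covering $\Sigma_\ell\to\Sigma$ is forced to be trivial over $\pi^{-1}\delta_0$, hence the cohomology class defining $\ell$ is supported over $\delta_0$, which pins down $\ell=\ell_0$; transitivity then propagates this to every separating configuration. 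Your proof needs an argument of exactly this kind -- some step converting the topological statement ``the node separates'' into the homological statement ``$\ell$ is carried by $H_1$ of $\pi^{-1}\delta$'' -- and as written the proposal does not contain it. The surrounding reductions you make (every line arises as some $\ell_\delta$, constancy of degeneration type along $\G_\ell$-orbits, irreducibility from transitivity) are fine.
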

\begin{proof}
By Lemma  \ref{lemma:transitive} we can  choose a standard set  of arcs  $\{\g_i \}_{i\in \ZZ/12}$ connecting the base point $o$ with the points of $D$ such that that the 
associated collection $\{ c_i\in \pi_1(S\ssm D, o)\}_{i\in \ZZ/12}$ is a standard set of 
generators of $\pi_1(S\ssm D, o)$ (so this is just as in the proof of Lemma \ref{lemma:transitive}) for which the covering $\Sigma'\to S$  has the property that
$\rho(c_0)=\rho(c_1)\not=\rho(c_2)=\cdots =\rho (c_{11})$ (so this is different than in the proof of Lemma \ref{lemma:transitive}).  The juxtaposition of $\g_{i+1}$ and 
the inverse of  $\g_{i}$ is isotopic to an arc $\delta_i$ as in  Section \ref{sect:DMball}, and as explained there,  determines a $(-3)$-vector $a_{\delta_i}\in L$ up to 
a $\mmu_6$-multiple  and hence a line $\ell_i$ in $\FF_3\otimes L$.

Let $\ell$ be the line in $\FF_3\otimes L$ for which $\Sigma'\cong\Sigma'_\ell$. We
claim that $\ell=\ell_0$. Indeed, the covering  $\Sigma'_\ell\to S$ is disconnected over $S\ssm \delta_0$. It is then not hard to see  that $\Sigma_\ell \to \Sigma$ must be a trivial  $\ell^\vee$-covering over the preimage of $\delta_0$. This implies that $\ell$ has its support over $\delta_0$, so that $\ell=\ell_0$. 

It remains to observe that for $i=0,1,2$, the mirror $\BB^{T_{\gamma_i}}$ parametrizes the points where the covering $\Sigma'_\ell\to P$,  acquires, after shrinking $\delta_i$,  respectively a separating double point, a point of ramification of order $2$ and a nonseparating double point. So  $\ell_0$, $\ell_1$, $\ell_2$ represent respectively $\Dcal_\Hcal$, $\Dcal_{\ram}$ and $\Dcal_{\sg}$. 
\end{proof}

\subsection*{The strict transform of the thetanull locus} The goal of this section is to show that the strict transform 
$\Mcal_4(g^1_3)^0$ of $\Mcal_4^0$  in $\Mcal_4(g^1_3)$ is also locally a ball quotient in $\BB_{\G'}$.
This was actually established in the paper with Heckman \cite{HL} and the goal of this section is to show how. 
The generic point of that locus parametrizes the canonical genus $4$ curves $X$ that lie on a quadric cone  
$Q^0$ in a 3-dimensional complex projective space and are transversally cut out on $Q^0$ by a cubic hypersurface (so not containing the vertex). The base of the cone  $Q^0$ (in other words, the set of \emph{rays} in $Q^0$, lines on $Q^0$ that pass through its vertex) is a conic. We denote it by $P$, since the projection $X\to P$ gives us the unique $g^1_3$ on $X$. We assume for now that the discriminant divisor $D$ of this projection is reduced.

The vertex  of $Q^0$ is the unique singular point of $Q^0$ and is resolved by a single blowup: 
$\hat Q^0\to Q^0$.   Its exceptional set, which we can identify with $P$,  has self-intersection $-2$. 
It defines a section of the  evident  morphism $\hat Q^0\to P$ whose fibers are the  rays of $Q^0$. 
In other words, $\hat Q^0$ is a Hirzebruch  surface whose exceptional section is $P$. Its Picard group is freely generated by $P$ and the class $f$  of  fiber. The curve 
$X$ on $Q^0$ has class $3(P+2f)$ and hence the class of $X+P$ is $3(P+2f)+P=4P+6f$. Since this is $2$-divisible, it determines  admits a double covering $E_X\to \hat Q^0$ with ramification divisor 
$X+P$ (\footnote{Kond\=o \cite{kondo}  considers instead the $\mmu_3$-cover of $\hat Q^0$ ramified along $X$, which produces a K3-surface with $\mmu_3$-action.}). The composite with the projection $\hat Q^0\to P$ defines an elliptic fibration for which
the preimage of $P\subset \hat Q^0$ in $E_X$ defines the zero section 
(self-intersection number $-1$) for whose natural involution (inversion) is the one defined by 
the double cover $E_X\to \hat Q^0$. 
Note that $D$ now also appears as the discriminant of  the  fibration $E_X\to  P$. Indeed, all fibers 
are smooth, except those over $D$: over each point of $D$  we find a nodal curve (a Kodaira fiber of 
type $I_1$) whose double point lies over the singular point of the projection $X\to P$. 

By taking  the $j$-invariant of the fiber, we obtain a  morphism 
\[
j: P\to \PSL_2(\ZZ)\bs \HH^*=\big(\PSL_2(\ZZ)\bs \HH\big)\cup\{\infty\}=\overline\Mcal_{1,1},
\]
where $\HH^*:=\HH\cup \PP^1(\QQ)$ endowed with the horocyclic topology. This morphism has 
degree $12$ and $D=j^*(\infty)$. In  \cite{HL} it was observed that the $D$ that thus appear 
can be  characterized  in $\BB_\G$ as hyperball quotient as follows. The abelianization of 
$\PSL_2(\ZZ)$ is naturally isomorphic with $\mmu_6$ and hence defines  a ramified 
$\mmu_6$-cover 
$\PSL_2(\ZZ)'\bs\HH\to\PSL_2(\ZZ)\bs \HH$.  This morphism extends across the cusp 
$\infty$ with total ramification, so that we get a $\mmu_6$-cover
\[
\overline\Mcal{}'_{1,1}:=\PSL_2(\ZZ)'\bs\HH^*\to \PSL_2(\ZZ)\bs \HH^*=\overline\Mcal_{1,1}.
\]
The left hand side can be identified with the (Eisenstein) elliptic curve of genus zero $\CC/\Ecal$ whose origin is the unique point over $\infty$ and on which $\mmu_6$ acts in the standard manner. The differential  on $\CC/\Ecal$ defined by $dz$ transforms under the  $\mmu_6$-action via the character $\chi^{-1}$ and  hence $d\overline z$  transforms with  the character $\chi$. So $H^1(\overline\Mcal{}'_{1,1}; \CC)_\chi$ is of Hodge type $(0,1)$.

The $\mmu_6$-cover $C\to P$ with discriminant  $D$ is then the pull-back of the above $\mmu_6$-cover under the 
$j$-morphism:
\[
\begin{CD}
C@>{j'}>>\overline\Mcal{}'_{1,1}\\
@V{\mmu_6}VV@V{\mmu_6}VV\\
P@>j>> \overline\Mcal_{1,1}
\end{CD}
\]
Since $j$ is of degree $12$, so is $j'$.

\begin{lemma}\label{lemma:}
The map $j'{}^*$ embeds $H^1(\overline\Mcal{}'_{1,1})$ in $H^1(C)^\circ$ as a primitive rank one 
$\Ecal$-submodule  that is generated by a vector $\eps$ with  $h(\eps, \eps)=-6$. 
\end{lemma}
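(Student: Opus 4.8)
The plan is to take $\eps:=j'^*\gamma$ for a generator $\gamma$ of the rank one $\Ecal$-module $H^1(\overline\Mcal{}'_{1,1})$ and to check the three assertions separately: that $j'^*$ is an $\Ecal$-linear embedding with image of rank one, that $h(\eps,\eps)=-6$, and that $\Ecal\eps$ is saturated in $L=H^1(C)^\circ$.

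First I would record the formal properties of $j'^*$. Since $\overline\Mcal{}'_{1,1}=\CC/\Ecal$ carries the complex-multiplication action of $\mmu_6$ with $\tau$ acting through the primitive character, $H^1(\overline\Mcal{}'_{1,1})$ equals its own $\circ$-part and is a free $\Ecal$-module of rank one. The morphism $j'$ is $\mmu_6$-equivariant by construction, so $j'^*$ is $\ZZ[\mmu_6]$-linear, hence $\Ecal$-linear on $\circ$-parts, and lands in $H^1(C)^\circ=L$. As $j'$ is finite of degree $12$ we have $j'_*\circ j'^*=12$, and multiplication by $12$ is injective on the torsion-free group $H^1(\overline\Mcal{}'_{1,1})$; thus $j'^*$ is injective and its image is the rank one $\Ecal$-submodule $\Ecal\eps$ with $\eps=j'^*\gamma$.

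Next I would compute $h(\eps,\eps)$ via a degree-scaling argument. Because the cup product obeys $\int_C j'^*\alpha\wedge j'^*\beta=12\int_{\overline\Mcal{}'_{1,1}}\alpha\wedge\beta$ and $j'^*$ commutes with $\tau$, the defining formula $h(x,y)=\tfrac12\tau(x\cdot y)-\tfrac12(x\cdot\tau y)$ gives $h(j'^*\alpha,j'^*\beta)=12\,h(\alpha,\beta)$. It then suffices to evaluate the Hermitian form of a generator on $E=\CC/\Ecal$ itself: here $H_1(E)=\Ecal$ with the complex orientation giving $1\cdot\tau=1$, so $h(1,1)=-\tfrac12(1\cdot\tau)=-\tfrac12$, and the same value holds on $H^1(E)$ under the ($\mmu_6$-equivariant) Poincar\'e duality that matches the two pairings. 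The sign is consistent with the stated fact that $H^1(E)_\chi$ is of type $(0,1)$, hence negative for $h$. Therefore $h(\eps,\eps)=12\cdot(-\tfrac12)=-6$.

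Finally, primitivity. The key observation is that every diagonal value of $h$ on $L$ lies in $3\ZZ$: writing $h=\theta h'$ with $h'$ the $\Ecal$-valued skew-Hermitian form of the excerpt, $h'(x,x)$ is purely imaginary in $\Ecal$ and so lies in $\ZZ\theta$, whence $h(x,x)\in\theta\cdot\ZZ\theta=\ZZ\theta^2=3\ZZ$. Now suppose $\eps=\lambda\eta$ with $\eta\in L$ and $\lambda\in\Ecal$; then $-6=h(\eps,\eps)=N(\lambda)\,h(\eta,\eta)$ with $h(\eta,\eta)\in 3\ZZ$, forcing $N(\lambda)\mid 2$. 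Since the norm form on $\Ecal$ is $a^2+ab+b^2$, the value $2$ is not a norm, so $N(\lambda)=1$ and $\lambda$ is a unit; hence $\Ecal\eps$ is saturated, i.e.\ primitive. The only delicate point in the whole argument is the normalization in the second step---pinning down the factor $\tfrac12$ and the orientation sign that make $h(1,1)=-\tfrac12$ on the Eisenstein curve, together with the degree-$12$ scaling of the pairing; once these are fixed, the primitivity is the short norm computation above.
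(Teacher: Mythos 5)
Your proof is correct and follows essentially the same route as the paper's: both compute $h(\eps,\eps)=-6$ by the degree-$12$ scaling of the hermitian form applied to an $\Ecal$-generator of $H^1(\overline\Mcal{}'_{1,1})$ (for which $a\cdot\tau a=1$), and both deduce primitivity from the two facts that $h(x,x)\in 3\ZZ$ on $H^1(C)^\circ$ and that $2$ is not a norm from $\Ecal$. Your transfer identity $j'_*\circ j'^*=12$ for injectivity and the explicit evaluation $h(1,1)=-\tfrac12$ on $\CC/\Ecal$ merely spell out steps the paper leaves implicit.
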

\begin{proof}
This is actually proved  in \cite{HL},  but let us  give here a simpler argument.
First note that since $H^1(\overline\Mcal{}'_{1,1})=H^1(\overline\Mcal{}'_{1,1})^\circ$, 
 $j'{}^*$ takes values in $H^1(C)^\circ$. 
If $a\in H^1(\overline\Mcal{}'_{1,1})$ is primitive, then $a \cdot \tau a=1$ and hence if we put
$\eps:=j'{}^*a$, then 
\[
h(\eps,\eps)=-\tfrac{1}{2} (\eps, \tau \eps)=-\tfrac{1}{2}.12 (a\cdot \tau a)=-6,
\] 
where we used that $j'$ is of degree $12$. This also implies that $\eps$ is primitive: if it is
divisible by $\lambda\in \Ecal$:  $\eps=\lambda\eps'$ with $\eps'\in H^1(C)^\circ$, then 
$\lambda\overline\lambda h(\eps' ,\eps')=-6$. But 
$\lambda\overline\lambda$ is a positive  integer  that cannot take the value $2$ and 
$h(\eps', \eps')\in \Ecal\theta\cap \QQ=3\ZZ$. 
This implies that $\lambda\overline\lambda=1$, meaning that $\lambda$ is a unit.
\end{proof}

Assertions A4 and A6 of \cite{HL} state that any $(-6)$-vector in $L$ can be written as the sum of two $(-3)$-vectors with inner product $\theta$ and that the $(-6)$-vectors make up a single $\G$-orbit. In particular, the hyperballs in $\BB$ defined  by such vectors define an irreducible totally geodesic divisor $\Dcal^0$ in $\BB_\G$. We can therefore complete Theorem \ref{thm:ballquotient2} as follows (see also \cite{HL}, Thm.\  8.2):

\begin{theorem}\label{thm:ballquotient3}
The isomorphism in Theorem \ref{thm:ballquotient2} takes the thetanull locus  $\Mcal_4(g^1_3)^0$ onto $\Dcal^0\ssm \Dcal_\sg$. 
In particular, $\Mcal_4(g^1_3)^0$ is a 
totally geodesic hypersurface in  $\Mcal_4(g^1_3)$. 
\end{theorem}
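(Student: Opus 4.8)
The statement to prove is Theorem~\ref{thm:ballquotient3}: under the isomorphism of Theorem~\ref{thm:ballquotient2}, the thetanull locus $\Mcal_4(g^1_3)^0$ corresponds to $\Dcal^0\ssm\Dcal_\sg$, so in particular it is totally geodesic in $\Mcal_4(g^1_3)$. The strategy is to identify, on the moduli side, what distinguishes a thetanull point and, on the period side, show that this condition is exactly the incidence with a $(-6)$-mirror. The geometric fact I would exploit is the one recalled in the preceding paragraph: the generic thetanull curve $X$ lies on a \emph{quadric cone} $Q^0$, and projection away from the vertex gives the $g^1_3$ whose discriminant $D$ is the branch locus of an elliptic fibration $E_X\to P$ with twelve $I_1$-fibers. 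Thus the thetanull condition on $(X,P)$ is precisely that the associated $\mmu_6$-cover $C\to P$ with discriminant $D$ arises as the pullback of the universal elliptic-curve cover along a degree $12$ $j$-map, as laid out before the preceding lemma.

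\textbf{Key steps.} First I would argue that the elliptic fibration $E_X\to P$ furnishes a distinguished class in the Hodge structure. Concretely, the preceding lemma (Lemma~\ref{lemma:}) shows that $j'{}^*$ embeds $H^1(\overline\Mcal{}'_{1,1})$ into $H^1(C)^\circ$ as a primitive rank-one $\Ecal$-submodule generated by a vector $\eps$ with $h(\eps,\eps)=-6$. The point is that $\eps$ is orthogonal to the Hodge line of $C$: because $H^1(\overline\Mcal{}'_{1,1};\CC)_\chi$ is of pure Hodge type $(0,1)$, the image of $\eps$ lands in the negative-definite summand $\overline{H^1(C,\Omega_C)_{\overline\chi}}$ of the decomposition~(\ref{Hdec}), so the Hodge point lies in the hyperball $\BB^\eps\subset\BB$ cut out by $\eps$. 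Hence every generic thetanull point maps into a $(-6)$-mirror, i.e.\ into $\Dcal^0$. Second, I would invoke Assertions A4 and A6 of \cite{HL}, quoted just above: the $(-6)$-vectors form a single $\G$-orbit and each splits as a sum of two $(-3)$-vectors with inner product $\theta$; this makes $\Dcal^0$ an irreducible totally geodesic divisor and connects a $(-6)$-mirror to the confluence mirrors, which is what governs the intersection behaviour with $\Dcal_\sg$. Third, I would check the reverse inclusion: a point of $\BB_{\G'}$ lying on $\Dcal^0$ but off $\Dcal_\sg$ corresponds to a $\mmu_6$-cover whose Hodge structure carries such an $\eps$, and I would reconstruct from $\eps$ the elliptic fibration, hence the quadric cone and the thetanull curve, reversing the construction of the preceding subsection. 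Finally, total geodesy of $\Mcal_4(g^1_3)^0$ is inherited directly: $\Dcal^0$ is totally geodesic in $\BB_\G$ (being a hyperball quotient), so its preimage in $\BB_{\G'}$ is totally geodesic, and removing $\Dcal_\sg$ preserves this.

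\textbf{Main obstacle.} The genuinely delicate step is the \emph{reverse} direction together with the matching of the three confluence components. Producing $\eps$ from a thetanull curve is essentially the content of Lemma~\ref{lemma:} and is under control; the harder task is to show that every $(-6)$-mirror point off $\Dcal_\sg$ actually comes from a genus-$4$ curve on a quadric cone, i.e.\ that the $j$-map and the elliptic surface can be recovered intrinsically from the period datum. Here one must verify that the splitting $\eps=a_{\delta}+a_{\delta'}$ (with $h(a_\delta,a_{\delta'})=\theta$) from A4 corresponds geometrically to a confluence of two ramification points into a point of total ramification—precisely the $\Dcal_{\ram}$-type degeneration described in Proposition~\ref{prop:}—so that the thetanull locus meets $\Dcal_{\hyp}$ exactly along the Weierstrass locus and is disjoint from $\Dcal_\sg$. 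Since \cite{HL}, Thm.~8.2, already carries this out in the rational-elliptic-surface framework, I expect the argument to reduce to translating that result through the dictionary of Theorem~\ref{thm:main}, with the remaining work being the careful identification of $\Dcal^0$ with the strict transform rather than the total transform of $\Mcal_4^0$.
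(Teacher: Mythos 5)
Your proposal follows essentially the same route as the paper: the paper likewise obtains the $(-6)$-vector $\eps$ from the quadric-cone/elliptic-fibration geometry via the degree-$12$ $j$-map (with the observation that $H^1(\overline\Mcal{}'_{1,1};\CC)_\chi$ has Hodge type $(0,1)$, which is exactly your argument that the Hodge point lands in the mirror of $\eps$), invokes Assertions A4 and A6 of \cite{HL} to get the irreducible totally geodesic divisor $\Dcal^0$, and defers the full identification---in particular the converse direction you correctly flag as the delicate step---to \cite{HL}, Thm.~8.2. Your explicit spelling-out of the orthogonality of $\eps$ to the Hodge line is left implicit in the paper, but the structure and the supporting citations of your argument coincide with its proof.
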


\begin{remark}\label{rem:}
One would expect a stronger assertion, namely that the  involution of $\Mcal_4(g^1_3)$ that assigns to $(X,P)$ the residual pair $(X,P')$ lifts to a reflection in $\G$ in a $(-6)$-vector $\eps$, in other words, is given by $x\in L\mapsto x +\frac{1}{3}h(x,\eps)\eps$. But such a reflection will not preserve $L$, as we must then have $h(x,\eps)\in 3\Ecal$. Indeed, since the $(-6)$-vectors lie in a single $\aut(L)$-orbit, it suffices to check this for one such vector, say 
$\eps:=a_{\delta_0}+a_{\delta_1}$. But $h(\eps,a_{\delta_2})=\theta$, which in $\Ecal$  is not divisible by $3$. It is surprising (and still a bit mystifying to us) that  $\BB_{\G'}\ssm \Dcal_\sg$ comes with an involution whose fixed point set is $\Dcal^0\ssm \Dcal_\sg$, but that is \emph{not} obtained from a subgroup $\hat\G'\subset \G$ with $[\hat\G':\G']=2$.
\end{remark}

\end{document}